\numberwithin{equation}{section}
\newtheorem*{theorem*}{Theorem}
\newtheorem{theorem}{Theorem}
\newtheorem{lemma}{Lemma}[section]
\newtheorem{prop}[lemma]{Proposition}
\newtheorem{cor}[lemma]{Corollary}
\theoremstyle{definition}
\newtheorem{definition}[lemma]{Definition}
\newtheorem{example}[lemma]{Example}
\newtheorem{remark}[lemma]{Remark}
\DeclareMathOperator{\area}{area}
\DeclareMathOperator{\vol}{vol}
\DeclareMathOperator{\conv}{conv}
\newcommand{\R}{\mathbb R}
\newcommand{\RP}{\mathbb{RP}}
\renewcommand{\phi}{\varphi}
\newcommand{\pd}{\partial}
\newcommand{\ep}{\varepsilon}
\newcommand{\syspi}{\text{\rm sys}\pi}
\newcommand{\vect}{\overrightarrow}
\newcommand{\rvect}{\overleftarrow}
\renewcommand{\L}{\mathcal L}
\def\be{\begin{equation}}
\def\ee{\end{equation}}
\begin{document}

\title{Filling minimality of Finslerian 2-discs}

\author{Sergei Ivanov}
\thanks{Supported by the Dynasty foundation and RFBR grant 09-01-12130-ofi-m}
%08-01-00079-a
\email{svivanov@pdmi.ras.ru}
\address{Saint Petersburg department of Steklov Math Institute,
191023, Fontanka 27, Saint Petersburg, Russia}
\subjclass[2000]{53C23, 53C60}
\keywords{Minimal filling, Finsler metric, Holmes--Thompson area, Pu's inequality}

\begin{abstract}
We prove that every Riemannian metric on the 2-disc such that
all its
geodesics are minimal, is a minimal filling of its boundary
(within the class of fillings homeomorphic to the disc).
This improves an earlier result of the author
by removing the assumption that the boundary is convex.
More generally, we prove this result for
Finsler metrics with area defined as the two-dimensional
Holmes--Thompson volume.
This implies a generalization
of Pu's isosystolic inequality to Finsler metrics, both for
Holmes--Thompson and Busemann definitions of Finsler area.
\end{abstract}

\maketitle

\section{Introduction}

For a Riemannian metric $g$ on a compact manifold $M$
with boundary, let $d_g$ denote
the corresponding distance function on $M\times M$.
The \textit{boundary distance function} of $g$,
denoted by $bd_g$, is the restriction of $d_g$
to $\pd M\times\pd M$.
That is, $bd_g(x,y)$ is the length of
a $g$-shortest path in $M$ between boundary points
$x$ and~$y$.

It is natural to ask what kind of information
about $g$ can be recovered if one knows the
boundary distance function (or an approximation of it).
In some cases $bd_g$
determines $g$ uniquely up to an isometry
but in general this is not the case.
Attaching a large ``bubble''
with a narrow neck has very little effect (in the $C^0$ sense)
on boundary distances,
and changing the metric within the bubble has
no effect at all.
Thus a metric with a given boundary distance function
can be arbitrary large in terms of Riemannian volume.

However it cannot be arbitrarily small.
The boundary distance function
(or a lower bound for it) imposes a positive
lower bound on the volume of the metric.
Metrics realizing this lower bound
are called \textit{minimal fillings}, see below.
Although minimal fillings are in a sense similar
to minimal surfaces (cf.~\cite{I08}),
they do not have similar existence and regularity properties.
Nevertheless there are many examples of smooth minimal fillings
(including all metrics sufficiently
close to a flat one, cf.~\cite{BI-annals}).

It is plausible that
every smooth Riemannian metric with minimal geodesics
(see below for a precise definition) is a minimal filling.
In \cite{I02}, this conjecture was proved in dimension~2
for discs with convex boundaries.
In this paper we remove the convex boundary assumption
and generalize the result to Finsler metrics.
(The Finslerian case appears in \cite{I02}
as well but the proof there is too sketchy.)
The Finslerian result implies that
Pu's isosystolic inequality \cite{Pu} holds for
Finsler metrics.
Even in the Riemannian case the resulting proof
of Pu's inequality differs from the original one,
in particular, it does not use the uniformization theorem.

\subsection{Riemannian minimal fillings}
Let $D$ denote the two-dimensional disc $D^2$
and $S=\pd D\simeq S^1$.
For a nonnegative function $d:S\times S\to\R$,
define the \textit{filling volume}
(or \textit{filling area}) of $(S,d)$,
denoted by $\operatorname{FillVol}_D(S,d)$,
by
\be\label{fillvol-def}
 \operatorname{FillVol}_D(S,d) = \inf_{g:bd_g\ge d} \area(D,g)
\ee
where the infimum is taken over all Riemannian metrics $g$ on $D$
such that the boundary distance function $bd_g$ is bounded below by~$d$.
Here $\area(D,g)$ denotes the two-dimensional Riemannian volume
of $D$ with respect to~$g$.

The notion of filling volume was introduced by Gromov~\cite{frm}.
The above definition differs from Gromov's in one essential detail
(indicated by the subscript $D$): we restrict ourselves to metrics
on the disc while in Gromov's definition one takes the infimum
over Riemannian manifolds of varying topology
(namely all orientable manifolds) whose boundaries
are identified with~$S$.
In higher dimensions restricting the topology type does not change the
filling volume \cite[App.~2, Prop.~A$'$]{frm},
but in dimension~2 the two definitions are probably not equivalent.

A metric $g_0$ (or a space $(D,g_0)$)
is said to be a \textit{minimal filling} if
its realizes the infimum in \eqref{fillvol-def} for some $d$,
or, equivalently, for $d=bd_{g_0}$.
Substituting the definitions yields the following reformulation:
$g_0$ is a minimal filling if and only if
for every Riemannian metric $g$ on $D$ satisfying
\be\label{riem-assum}
 d_g(x,y) \ge d_{g_0}(x,y)\qquad\text{for all $x,y\in\pd D$},
\ee
one has 
\be\label{riem-conclu}
\area(D,g)\ge\area(D,g_0) .
\ee

Classic examples of two-dimensional
minimal fillings are the standard hemisphere
(this follows from Pu's inequality, see below)
and regions in the Euclidean and hyperbolic planes,
cf.~\cite{frm}. In fact, Euclidean and hyperbolic regions
are minimal in a stronger sense of Gromov's definition
%(i.e.\ within the class of all orientable fillings)
while the hemisphere is known to be minimal only
within the class of fillings
homeomorphic to the disc.

We say that $g$ is a \textit{metric with minimal geodesics}
if every $g$-geodesic in the interior of $D$ is a shortest
path between its endpoints.
%It can be shown that
%(for metrics on the 2-disc)
%this is equivalent to the property that these geodesics
%have no conjugate points.
By continuity, minimality of all geodesics in the interior
implies minimality of geodesics with endpoints
on the boundary but otherwise contained in the interior.
We do not consider geodesics that
have points on $\pd D$ other than endpoints.

One of the goals of this paper is to prove
the following theorem.

\begin{theorem}\label{riem}
Every Riemannian metric with minimal geodesics
on $D$ is a minimal filling is the above sense.
\end{theorem}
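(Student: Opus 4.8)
The plan is to deduce the area comparison \eqref{riem-conclu} from a calibration argument carried out in the Banach space $L^\infty(S)$, with the calibrating form built out of the geodesics of the model metric; this uses no conformal representation, and is set up so as to apply without change to Finsler metrics with Holmes--Thompson area.

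Let $\Phi_0\colon D\to L^\infty(S)$ be the map $\Phi_0(p)=d_{g_0}(p,\cdot)|_S$. It is $1$-Lipschitz by the triangle inequality, and since $bd_g\ge bd_{g_0}$ its restriction to $\pd D$ is still $1$-Lipschitz as a map from $(\pd D,bd_g)$. As $L^\infty(S)$ is an injective metric space, this restriction extends to a $1$-Lipschitz map $F\colon(D,g)\to L^\infty(S)$; concretely one may take $F(p)(z)=\sup_{x\in\pd D}\bigl(d_{g_0}(x,z)-d_g(p,x)\bigr)$, and the hypothesis $d_g\ge d_{g_0}$ on $\pd D$ is exactly what forces $F|_{\pd D}=\Phi_0|_{\pd D}$. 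Because $F$ and $\Phi_0$ agree on $\pd D$, the Lipschitz $2$-currents $F_*[D]$ and $(\Phi_0)_*[D]$ have a common boundary, so their difference is a cycle; since $L^\infty(S)$ is convex this cycle bounds, and hence $\int_DF^*\Omega=\int_D\Phi_0^*\Omega$ for every closed $2$-form $\Omega$ of bounded comass on $L^\infty(S)$.

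It therefore suffices to construct a closed $2$-form $\Omega$ on $L^\infty(S)$ with $\Phi_0^*\Omega=\alpha_0$, the area form of $g_0$, and with comass at most $1$ relative to the Holmes--Thompson area of $2$-planes in $(L^\infty(S),\|\cdot\|_\infty)$. I would take $\Omega$ to have constant coefficients (so it is automatically closed), assembled from the geodesics of $g_0$ as follows: to each $g_0$-geodesic $\gamma$, with endpoints $a(\gamma),b(\gamma)\in S$, attach the functional $\ell_\gamma=\delta_{a(\gamma)}-\delta_{b(\gamma)}\in L^\infty(S)^*$, and set $\Omega=c\iint\ell_{\gamma_1}\wedge\ell_{\gamma_2}\,d\mu(\gamma_1)\,d\mu(\gamma_2)$, the integral extended over pairs of coherently oriented $g_0$-geodesics that cross inside $D$, where $\mu$ is the Liouville measure on the space $\L$ of $g_0$-geodesics and $c$ a universal constant. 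That $\Phi_0^*\Omega=\alpha_0$ is then precisely the two-dimensional Crofton (kinematic) formula expressing the Holmes--Thompson area of $g_0$ as a weighted measure of crossing pairs of its geodesics; this is where the hypothesis of minimal geodesics enters, as it makes $\L$ a well-behaved surface, makes the Liouville measure and the first-variation formula for distance available, and validates that formula. The remaining property --- the comass bound $|\Omega(v\wedge w)|\le\area_{HT}\bigl(\text{pgram}(v,w)\bigr)$ for all $v,w\in L^\infty(S)$ --- is the step I expect to be the main obstacle: it is a genuine inequality governing the interaction of the functionals $\ell_\gamma$ with the sup-norm, and it is exactly where the dimension being $2$ and the \emph{Holmes--Thompson} normalization of area are essential (for the Busemann normalization the analogue fails).

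Granting such an $\Omega$, the theorem follows: since $F$ is $1$-Lipschitz, at almost every $p$ the $\|\cdot\|_\infty$-parallelogram spanned by $dF_p(v)$ and $dF_p(w)$ has Holmes--Thompson area at most the $g$-area of the parallelogram spanned by $v$ and $w$, so by the comass bound $|F^*\Omega|\le d\area_g$ pointwise, and therefore
\[
 \area(D,g_0)=\int_D\Phi_0^*\Omega=\int_DF^*\Omega\le\int_D\,d\area_g=\area(D,g),
\]
which is \eqref{riem-conclu}. Two points need more care than is sketched here. First, when $\pd D$ is not convex the geodesics of $g_0$ may approach or graze $\pd D$, so the Liouville measure and the Crofton formula must be treated carefully near the boundary; this is precisely the convexity assumption being removed relative to the earlier result. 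Second, running the whole argument with $\area$ read throughout as Holmes--Thompson area gives the Finsler version of the theorem, and from it the Busemann statements and Pu's inequality follow, since Busemann area dominates Holmes--Thompson area and the relevant extremal model (the hemisphere) is Riemannian.
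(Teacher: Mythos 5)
Your proposal is a genuinely different route from the paper's: you set up an infinite-dimensional calibration in $L^\infty(S)$, whereas the paper works with finite-dimensional ``cyclic maps'' $f=(f_{p_1},\dots,f_{p_n}):D\to\R^n$ and the constant $2$-form $\omega=\sum dx_i\wedge dx_{i+1}$, replacing a comass bound by a combinatorial/topological condition (Definition~\ref{def-cyclic}) which it verifies directly. The ideas overlap more than one might expect: your candidate extension $F(p)(z)=\sup_{x\in\pd D}\bigl(d_{g_0}(x,z)-d_g(p,x)\bigr)$ is (after reindexing) exactly the paper's $f_p$ from \eqref{specfun}, your claim $F|_{\pd D}=\Phi_0|_{\pd D}$ is the paper's Lemma~\ref{same-on-boundary}, your observation that a common boundary forces $\int_DF^*\Omega=\int_D\Phi_0^*\Omega$ is the paper's Lemma~\ref{l:I-by-boundary}, and your Crofton/Liouville density argument for the model metric is the paper's Proposition~\ref{approx-santalo}. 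So the scaffolding is parallel; the substantive divergence is precisely at the step you flag.

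And that step is a real gap, not a formality. The comass bound you need --- that the crossing-pairs form $\Omega$ is dominated pointwise by the Holmes--Thompson $2$-area on \emph{every} $2$-plane of $L^\infty(S)$ --- is exactly what the paper does not try to prove. Instead the paper proves the pullback inequality $I(f)\le\vol_2^{ht}(D,\phi)$ only for $f$ satisfying the cyclic condition, and the mechanism (Lemma~\ref{l-conv}) is that cyclicity turns $I(f)$ into $\frac1\pi\int_D|\conv\{d_xf_i\}|\,dx$, which is bounded by $\frac1\pi\int|B_x^*|$ simply because the $d_xf_i$ lie on $U_x^*$. For a non-cyclic map the identity of Lemma~\ref{l-conv} fails (the oriented triangles $\triangle 0\,d_xf_i\,d_xf_{i+1}$ can partially cancel), so nothing about this argument transfers to a blanket comass statement. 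The nontrivial input --- the reason the specific extension $F$ is admissible --- is that $(f_{p_1},\dots,f_{p_n})$ is in fact cyclic with respect to $\phi$; this is Proposition~\ref{special-is-cyclic}, and its proof hinges on a Jordan-Curve-Theorem argument (Lemma~\ref{cyclic-aux}) plus a purely combinatorial cyclic-order identity (Lemma~\ref{cyclic-aux2}). Your calibration framework has no counterpart for this: you either prove the unconditional comass bound (which, even if true, would essentially embed these lemmas as a special case, and your $\Omega$ built from crossing geodesic pairs is not obviously the right $2$-form --- in the discretization it differs from the paper's $\sum dx_i\wedge dx_{i+1}$), or you must restrict to a cyclic-type class of maps and then verify $F$ belongs to it, which is the paper's proof. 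As written the proposal reduces the theorem to an unproved inequality that is at least as hard as the original statement, so it does not constitute a proof. One further (minor) caveat: your remark that the non-convexity of $\pd D$ is handled by care with the Crofton formula near the boundary does not match the paper's mechanism; there, non-convexity is absorbed by Lemma~\ref{min-unique} and the surjectivity of $q\mapsto\rvect{xq}$ inside Proposition~\ref{approx-santalo}.
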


This theorem was proved in \cite{I02} 
under an additional
assumption that the boundary of the disc is convex
with respect to the metric.
The proof in this paper is essentially the same
as in \cite{I02} modulo technical details
allowing the proof to work in the case of a non-convex boundary.
The key idea to estimate areas using cyclic order
of gradients of distance functions is borrowed
from \cite{BI02}, and it was used earlier
by Croke and Kleiner~\cite{CK95}.

\subsection{Finslerian case}
\label{intro-finsler}

Theorem \ref{riem} is a partial case of Theorem \ref{main}
which asserts the same fact for Finsler metrics
(including non-reversible ones).
We do not use heavy machinery of Finsler geometry,
all necessary definitions and facts are included
here and in section~\ref{sec-finsler}.
Details and proofs can be found e.g.\ in~\cite{Shen}.

A \textit{Finsler metric} on a smooth manifold $M$
is a continuous function $\phi:TM\to\R$ satisfying the following
conditions:

(1) $\phi(tv)=t\phi(v)$ for all $v\in TM$ and $t\ge 0$;

(2) $\phi$ is positive on $TM\setminus 0$;

(3) $\phi$ is smooth on $TM\setminus 0$
(for our purposes, $C^2$ smoothness is sufficient);

(4) $\phi$ is strictly convex in the following sense:
for every $x\in M$, the function $\phi^2|_{T_xM}$
has positive definite second derivatives on $T_xM\setminus\{0\}$.

A Finsler metric $\phi$ is \textit{reversible}
(or \textit{symmetric}) if
$\phi(-v)=\phi(v)$ for all $v\in TM$.

%We refer to \cite{Shen} for the basics of Finsler geometry.
A Finsler metric $\phi$ on $M$ can be thought of as a family
of (non-symmetric) norms $\phi_x:=\phi|_{T_xM}$, $x\in M$,
on the fibers of $TM$.
Riemannian metrics are partial case of (reversible)
Finsler metrics, they are characterized by the property
that all norms $\phi_x$ are Euclidean.
For a Finsler metric $\phi$, one naturally defines geodesics, lengths
and a (non-symmetric) distance function
$d_\phi:M\times M\to\R_+$, cf.\ section \ref{sec-finsler}.
We define the boundary distance function and
metrics with minimal geodesics
in the same way as in the Riemannian case.

The definition of area is a more delicate subject.
There are several non-equivalent definitions of area
and volume in Finsler geometry, cf.\ \cite{Thompson}
or \cite{AT} for a survey.
The most widely used definition
is Busemann's \cite{Busemann}
where Finsler volume is defined so that the volume of a unit
ball in every $n$-dimensional normed space is the same as
that of the standard Euclidean ball in~$\R^n$.
The Busemann volume of a reversible Finsler metric equals
the Hausdorff measure of the corresponding metric space.
We denote the Busemann volume of an $n$-dimensional Finsler manifold
$(M,\phi)$ by $\vol_n^b(M,\phi)$.

In this paper we mainly use another definition,
namely the Holmes--Thompson volume \cite{HT}.
By definition, the Holmes--Thompson volume
$\vol_n^{ht}(M,\phi)$ of an $n$-dimensional
Finsler manifold $(M,\phi)$
equals the canonical (symplectic) volume of
the bundle of unit balls in $T^*M$, normalized
by a suitable constant (namely divided by
the Euclidean volume of the unit ball in $\R^n$).
See section \ref{sec-finsler} for more details.

This choice of volume definition
is enforced by the following fact: Finsler metrics
(including metrics with minimal geodesics)
admit non-isometric deformations preserving the
boundary distances. These deformations preserve
the Holmes--Thompson volume but not the Busemann
(or any other) volume.

The main result of this paper is the following theorem.

\begin{theorem}\label{main}
Let $\phi_0$ be a Finsler metric
with minimal geodesics on $D$,
and let $\phi$ be a Finsler metric on $D$
such that
$$
 d_\phi(x,y) \ge d_{\phi_0}(x,y)\qquad\text{for all $x,y\in\pd D$}.
$$
Then
$$
 \vol_2^{ht}(D,\phi) \ge \vol_2^{ht}(D,\phi_0)
$$
with equality if and only if $\phi$ is a metric
with minimal geodesic whose boundary distance function
equals that of $\phi_0$.
\end{theorem}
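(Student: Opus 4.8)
\emph{Outline of the proof.}
The plan is to compute $\vol_2^{ht}(D,\phi_0)$ exactly in terms of $bd_{\phi_0}$ by an integral-geometric (``shoelace'') formula, and then to bound $\vol_2^{ht}(D,\phi)$ from below by the same quantity. We use that in dimension two $\vol_2^{ht}(D,\psi)=\frac1\pi\int_D|B^*_\psi(x)|\,dx$, where $B^*_\psi(x)=\{\xi\in T^*_xD:\psi^*(\xi)\le1\}$ is the dual unit ball, $\psi^*$ the dual norm, and $|B^*_\psi(x)|$ its Lebesgue area (the density $|B^*_\psi(x)|\,dx$ being coordinate-free).

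Fix $\phi_0$, parametrize $S=\pd D$ as a circle with parameter $p$, and set $h(p,x)=d_{\phi_0}(p,x)$. Each $f_p:=h(p,\cdot)$ satisfies the eikonal equation $\phi_0^*(df_p)\equiv1$ off $p$ and its cut locus, and --- this is essentially where minimality of the geodesics of $\phi_0$ is used --- the loop $p\mapsto df_p(x)=d_xh(p,x)$ traverses the dual unit circle $\pd B^*_{\phi_0}(x)$ monotonically and exactly once (the ``cyclic order of gradients'' of \cite{BI02},~\cite{CK95}, as exploited in \cite{I02}). Hence, by the shoelace formula for the area enclosed by a closed curve, and using $\pd_p d_x=d_x\pd_p$,
\[
 |B^*_{\phi_0}(x)|=\tfrac12\int_S d_xh(p,x)\wedge d_x\,\pd_p h(p,x)\,dp
\]
(for a fixed orientation of $S$). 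Integrating over $D$ and applying Stokes' theorem on $S\times D$, whose boundary is $S\times S$, to the exact $3$-form $d_xh\wedge d_x\pd_p h\wedge dp=d(h\,d(\pd_p h)\wedge dp)$ gives
\[
 \vol_2^{ht}(D,\phi_0)=\frac1{2\pi}\int_{S\times S}h\,d(\pd_p h)\wedge dp ,
\]
which depends only on $h|_{S\times S}=bd_{\phi_0}$. (The singularities of $h$ along the diagonal and the cut locus are harmless, handled by working off a null set or by approximation.)

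Now take a competitor $\phi$ with $d_\phi\ge d_{\phi_0}$ on $S\times S$, and set $\tilde f_p(x)=\min_{q\in S}(bd_{\phi_0}(p,q)+d_\phi(q,x))$, $\tilde h(p,x)=\tilde f_p(x)$. Then $\tilde f_p(x')-\tilde f_p(x)\le d_\phi(x,x')$, so $\phi^*(d\tilde f_p)\le1$ a.e.\ and $\xi_x(p):=d_x\tilde h(p,x)$ lies in $B^*_\phi(x)$; and the triangle inequality for $bd_{\phi_0}$ together with $d_\phi\ge d_{\phi_0}$ on $S$ forces $\tilde f_p|_S=bd_{\phi_0}(p,\cdot)|_S$, i.e.\ $\tilde h=h$ on $S\times S$. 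The Stokes computation above, now for $\tilde h$, therefore gives
\begin{align*}
 \frac1\pi\int_D\Bigl(\tfrac12\int_S\xi_x(p)\wedge\pd_p\xi_x(p)\,dp\Bigr)dx
 &=\frac1{2\pi}\int_{S\times S}\tilde h\,d(\pd_p\tilde h)\wedge dp\\
 &=\vol_2^{ht}(D,\phi_0).
\end{align*}
Thus the theorem follows once we prove, for a.e.\ $x$,
\[
 \tfrac12\int_S\xi_x(p)\wedge\pd_p\xi_x(p)\,dp\;\le\;|B^*_\phi(x)| ,
\]
since then $\vol_2^{ht}(D,\phi)=\frac1\pi\int_D|B^*_\phi(x)|\,dx\ge\vol_2^{ht}(D,\phi_0)$.

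This last inequality --- in place of the exact identity available for $\phi_0$ --- is the crux, and it is here that one needs extra care to handle a non-convex boundary, compared with \cite{I02}. Its left-hand side is the signed area swept by the loop $p\mapsto\xi_x(p)$; by the envelope theorem $\xi_x(p)=d_xd_\phi(q^*(p,x),x)$, where $q^*(p,x)\in S$ realizes the minimum, and by the eikonal equation this actually lies on $\pd B^*_\phi(x)$, so the desired inequality amounts to showing that the loop $p\mapsto\xi_x(p)$ is weakly monotone about the origin (hence of degree at most one, so it covers $\pd B^*_\phi(x)$ once or, if $\xi_x$ is constant in $p$, not at all). One proves this by showing that $q^*(\cdot,x)$ runs monotonically around $S$ and that the minimizing $\phi$-paths from boundary points to $x$ fit together into a topological fan at $x$, so that the incoming directions at $x$, and hence the covectors $\xi_x(p)$, are cyclically ordered consistently with $p$. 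Making this rigorous is precisely where the non-convexity of $\pd D$ must be confronted: minimizing paths may slide along concave arcs of $\pd D$, cut loci may accumulate on $\pd D$, and $h$ and $\tilde h$ need not be smooth, so one must argue with almost-everywhere differentials and suitable approximations --- this is the part left too sketchy in \cite{I02} and carried out carefully here. Finally, equality in Theorem~\ref{main} forces equality in the pointwise estimate for a.e.\ $x$, so $p\mapsto\xi_x(p)$ covers all of $\pd B^*_\phi(x)$ for a.e.\ $x$; unwinding this, $\phi^*(d\tilde f_p)\equiv1$, the gradient curves of the $\tilde f_p$ are minimizing $\phi$-geodesics, and one concludes that $\phi$ has minimal geodesics with $bd_\phi=bd_{\phi_0}$. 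Theorem~\ref{riem} is the special case in which every $\phi_x$ is Euclidean.
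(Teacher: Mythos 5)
Your outline is the same circle of ideas the paper uses, reorganized from a discrete into a continuous picture. The paper works with finitely many boundary points $p_1,\dots,p_n$, the functional $I(f)=\frac1{2\pi}\int_D\sum df_i\wedge df_{i+1}$ and a limiting argument (Proposition~\ref{approx-santalo}); you integrate over the whole boundary circle and use Stokes on $S\times D$. These are the same computation, and your shoelace identity $|B^*_{\phi_0}(x)|=\tfrac12\int_S d_xh\wedge d_x\pd_ph\,dp$ is the continuum limit of Lemma~\ref{l-conv} plus the surjectivity argument inside Proposition~\ref{approx-santalo}. Your auxiliary functions $\tilde f_p(x)=\min_{q\in S}\bigl(bd_{\phi_0}(p,q)+d_\phi(q,x)\bigr)$ are the inf-convolution variant of the paper's $f_p(x)=\sup_{q\in\pd D}\{d_{\phi_0}(p,q)-d_\phi(x,q)\}$; both are forward $1$-Lipschitz for $\phi$, both restrict to $bd_{\phi_0}(p,\cdot)$ on $\pd D$, and both produce gradients on $U^*_x$ via Lemma~\ref{dist-grad} (one through case (1), the other through case (2)), so this is a cosmetic difference.

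The gap is precisely at the point you yourself flag as the crux. The whole content of the theorem beyond \cite{I02} is to show, for a competitor $\phi$ with possibly non-convex boundary, that for a.e.\ $x$ the covectors $p\mapsto\xi_x(p)=d_x\tilde f_p$ are cyclically ordered consistently with $p$ --- equivalently, that $(\tilde f_{p_1},\dots,\tilde f_{p_n})$ is a cyclic map in the sense of Definition~\ref{def-cyclic}. Your proposal asserts this (``$q^*(\cdot,x)$ runs monotonically around $S$ and the minimizing $\phi$-paths form a topological fan at $x$'') and then says that making it rigorous ``is precisely where the non-convexity must be confronted,'' but supplies no argument. This is exactly what the paper proves: Lemma~\ref{pmax} identifies $d_xf_{p_i}=\L(\vect{xq_i})$ where $q_i$ is a point of maximum, Lemma~\ref{cyclic-aux} shows (by a crossing-plus-triangle-inequality argument applied to $\phi_0$-shortest paths in $D$, which exist because $\phi_0$, not $\phi$, has minimal geodesics) that $p_i\ne q_i$ and $\{p_i,q_j\}$ never separates $\{p_j,q_i\}$ on $\pd D$, and Lemma~\ref{cyclic-aux2} deduces the coincidence of cyclic orderings from that combinatorial fact. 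Note also that the paper does \emph{not} claim $q^*(\cdot,x)$ is monotone --- the $q_i$ need not vary monotonically or even continuously in $p$ --- only that their cyclic order agrees with that of the $p_i$, which is a weaker and more robust statement than what you assert. Until that step is actually carried out, the proposal is a correct roadmap but not a proof.

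One more minor point: the equality case requires showing that a non-minimal $\phi$-geodesic, or a strict inequality $d_\phi(p,q)>d_{\phi_0}(p,q)$ for some boundary pair, allows a volume-decreasing Finslerian perturbation of $\phi$ supported away from velocity vectors of minimizers; your ``unwinding'' sketch points in the right direction but this perturbation argument should be stated, as it is the mechanism that converts equality of volumes into equality of boundary distance functions.
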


The proof of Theorem \ref{main} is contained in sections
\ref{sec-finsler}, \ref{sec-cyclic} and~\ref{sec-mainproof}.
Section \ref{sec-finsler} contains preliminaries
and some technical facts about Finsler metrics,
in section \ref{sec-cyclic} we obtain a lower bound for
$\vol_2^{ht}(D,\phi)$ in terms of a ``cyclic map''
(Proposition \ref{I<vol}),
and in section \ref{sec-mainproof} we construct a special
cyclic map such that the resulting lower bound equals
$\vol_2^{ht}(D,\phi_0)$.
The equality case of the theorem follows from
the inequality, this is explained in the end
of section~\ref{sec-mainproof}.
Section \ref{sec-pu} is a digression where
we prove Pu's inequality for Finsler metrics, see below.

\subsection{Pu's inequality}

For a metric $\phi$ on a manifold $M$,
denote by $\syspi_1(M,\phi)$
the one-dimensional homotopy systole,
that is the length of the shortest noncontractible
loop in $(M,\phi)$.
Certain types of manifolds (cf.\ \cite{frm} for details)
admit an inequality of the form
$$
\syspi_1(M,g)\le C(M)\cdot\vol_n(M,g)^{1/n}
$$
for any Riemannian metric $g$ on $M$,
where $n$ is the dimension and $C(M)$ 
depends only on the topology of~$M$.
The optimal value of the constant $C(M)$
is known only in a few cases, one of which
is Pu's theorem for $M=\RP^2$:

\begin{theorem*}[P.~Pu \cite{Pu}]
For every Riemannian metric $g$ on $\RP^2$,
one has
$$
 \area(\RP^2,g) \ge \frac2\pi\,\syspi_1(\RP^2,g)^2
$$
with equality if and only if $g$ has constant curvature
(or, equivalently, is isometric to a rescaling of
a standard ``round'' metric).
\end{theorem*}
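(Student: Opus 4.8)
The plan is to deduce Pu's inequality from Theorem~\ref{riem} (and, in the Finsler case, from Theorem~\ref{main}) by cutting $\RP^2$ open along a shortest noncontractible loop and comparing the resulting disc with a round hemisphere, which is a metric with minimal geodesics and hence a minimal filling.

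Fix a Riemannian metric $g$ on $\RP^2$ and put $L=\syspi_1(\RP^2,g)$. I would first recall the standard structure of a shortest noncontractible loop $\gamma$: it exists by compactness, and cut-and-paste and first-variation arguments show that it is a simple closed geodesic of length $L$ which is one-sided, so that $\RP^2\setminus\gamma$ is an open disc. Cutting $\RP^2$ along $\gamma$ then produces a Riemannian disc $(D,g_D)$ diffeomorphic to $D^2$, smooth up to its boundary because $\gamma$ is a geodesic, with $\area(D,g_D)=\area(\RP^2,g)$, whose boundary is a closed geodesic of length $2L$ that double-covers $\gamma$ by a fixed-point-free involution $\sigma$, and for which the fold map $\pi\colon D\to\RP^2$ preserves the length of every path. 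Parametrize $\partial D$ by arc length, writing $\partial D(t)$ for $t$ taken modulo $2L$, so that $\sigma$ becomes $t\mapsto t+L$.

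Let $H$ be the closed hemisphere of the round sphere of radius $L/\pi$: it is diffeomorphic to $D$, has $\area(H)=\tfrac2\pi L^2$, and is a metric with minimal geodesics, since every great-circle arc contained in the open hemisphere has length $<\pi\cdot(L/\pi)=L$ and is therefore minimizing even in the whole sphere. Its boundary distance function $bd_H$ is the intrinsic metric of the equator, i.e.\ of a circle of length $2L$. The key step is the estimate $bd_{g_D}\ge bd_H$ under the arc-length identification of $\partial D$ with the equator. Given $x=\partial D(s)$ and $y=\partial D(t)$, a $g_D$-shortest path between them projects by $\pi$ to a path of the same length in $\RP^2$ joining the corresponding points of $\gamma$; as $D$ is simply connected, this projected path is homotopic rel endpoints to the $\pi$-image of a boundary arc of $D$. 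Concatenating it with a suitable sub-arc of $\gamma$ of length $L-\rho$, where $\rho$ is the distance between $s$ and $t$ on the circle of length $2L$, yields a loop that crosses $\gamma$ an odd number of times and is thus noncontractible, so its length is $\ge\syspi_1(\RP^2,g)=L$; rearranging gives that the original path has length $\ge\rho=bd_H(x,y)$. With $bd_{g_D}\ge bd_H$ and $H$ a metric with minimal geodesics, Theorem~\ref{riem} yields $\area(D,g_D)\ge\area(H)=\tfrac2\pi L^2$, that is, $\area(\RP^2,g)\ge\tfrac2\pi\,\syspi_1(\RP^2,g)^2$.

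In the equality case all of the above are equalities, so $\area(D,g_D)=\area(H)$ and, by the equality clause of Theorem~\ref{main}, $g_D$ has minimal geodesics and $bd_{g_D}=bd_H$. What remains is to upgrade this to an isometry $(D,g_D)\cong H$, i.e.\ to the boundary-distance rigidity of the round hemisphere; once this is known, regluing along $\sigma$ shows that $g$ has constant curvature, and the converse is a direct computation. I expect this last point to be the main obstacle, since Theorem~\ref{main} by itself supplies only a competitor with minimal geodesics and the correct boundary data, so the rigidity of the hemisphere must be obtained separately, either by citing it as a known result or by an additional argument specific to metrics with minimal geodesics. A secondary technical point is the homotopy-class bookkeeping in the key estimate, which is exactly what replaces the naive inequality $d_{g_D}\ge d_g\circ\pi$ and lets the argument work with a non-convex (geodesic) boundary. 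Finally, running the same scheme with Theorem~\ref{main} in place of Theorem~\ref{riem} together with an appropriate Finsler hemisphere gives the Finsler version of Pu's inequality for the Holmes--Thompson area, and hence for the Busemann area, since $\vol_2^{ht}(\RP^2,\phi)\le\vol_2^b(\RP^2,\phi)$ for every Finsler metric $\phi$.
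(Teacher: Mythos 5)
Your proposal follows essentially the same route as the paper: cut $\RP^2$ along a shortest noncontractible simple closed geodesic, compare the resulting disc with the round hemisphere of matching boundary length, and apply the filling-minimality theorem. The only appreciable difference in the inequality part is how the boundary-distance bound is obtained: the paper observes that opposite boundary points of the cut-open disc are at distance exactly $L$ (since any connecting path projects to a noncontractible loop) and then applies the triangle inequality, whereas you argue directly by closing the projected shortest path with a sub-arc of $\gamma$ of length $L-\rho$ and checking noncontractibility via a homotopy to $\gamma$; the two derivations are equivalent in substance. On the equality clause you are right to flag the gap, but it is not a defect peculiar to your write-up: the paper's reduction to Theorem~\ref{riem} likewise establishes only the inequality, the equality statement is quoted as part of Pu's original result, and Remark~2 in the paper explicitly concedes that the Finslerian nature of the method does not give the Riemannian equality case.
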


It is easy to see that Pu's inequality
is equivalent to the fact that the
hemisphere $S^2_+$ with its standard Riemannian
metric is a minimal filling.
To reduce Pu's inequality to the filling minimality
of the hemisphere, 
rescale the metric so that $\syspi_1(\RP^2,g)=\pi$
and cut $\RP^2$ along a $g$-shortest
noncontractible loop. The resulting space
is a disc with a Riemannian metric such that
the length of the boundary is $2\pi$
and the distance between every pair of
opposite points of the boundary is~$\pi$.
Then the triangle inequality implies that
the distance between any two boundary points
is realized by an arc of the boundary.
This means that \eqref{riem-assum} is
satisfied if $g_0$ is the metric of the
standard hemisphere (whose boundary circle
is identified with the boundary of our disc
in a length-preserving way).

Since the geodesics in the hemisphere are minimal,
Theorem \ref{riem} implies that the hemisphere is a minimal filling.
Therefore the area of our metric on the disc
(and hence of the original metric on $\RP^2$)
is at least that of the hemisphere, that is, $2\pi$.
Thus Pu's inequality follows from Theorem \ref{riem}.

The above argument applies without changes to Finsler metrics
(except that the triangle inequality part requires
symmetry of the metric). Thus Theorem \ref{main}
implies the following Finslerian generalization
of Pu's inequality.

\begin{theorem}\label{pu}
Let $\phi$ be a reversible Finsler metric on $\RP^2$.
Then
$$
 \vol_2^{ht}(\RP^2,\phi) \ge \frac2\pi\, \syspi_1(\RP^2,\phi)^2 .
$$
\end{theorem}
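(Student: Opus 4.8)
The plan is to deduce Theorem~\ref{pu} from Theorem~\ref{main} by Gromov's device of cutting $\RP^2$ open along a shortest noncontractible loop, following verbatim the Riemannian reduction sketched above. Since $\syspi_1$ and $\vol_2^{ht}$ are homogeneous of degrees $1$ and $2$ under rescaling of $\phi$ (lengths scale linearly and the symplectic volume of the unit co-disc bundle of a surface scales quadratically), it suffices to prove $\vol_2^{ht}(\RP^2,\phi)\ge 2\pi$ after normalizing so that $\syspi_1(\RP^2,\phi)=\pi$. I would pick a $\phi$-shortest noncontractible loop $\gamma$; a standard cut-and-shorten argument shows it may be taken simple, and since a noncontractible simple closed curve in $\RP^2$ is one-sided, cutting $\RP^2$ along $\gamma$ produces a topological disc $D$ carrying the induced Finsler metric (still denoted $\phi$) whose boundary double covers $\gamma$ and hence has $\phi$-length $2\pi$; moreover $\vol_2^{ht}(D,\phi)=\vol_2^{ht}(\RP^2,\phi)$ because the cut locus has measure zero.

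Next I would verify the boundary-distance hypothesis of Theorem~\ref{main} for $(D,\phi)$. Parametrize $\pd D$ by arc length as a circle of total length $2\pi$, and for $p\in\pd D$ write $p^*$ for the point at arc-distance $\pi$ from $p$, which is the other preimage of the point $\bar p\in\gamma$. The crucial lemma is that $d_\phi(p,p^*)=\pi$: the bound $\le\pi$ is given by either boundary arc, while any path in $D$ from $p$ to $p^*$ closes up to a loop based at $\bar p$ that is noncontractible in $\RP^2$ — this is the one step that needs a genuine topological argument, via the covering-space description of the cut or an intersection-number count against $\gamma$ — hence has length $\ge\syspi_1=\pi$. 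Granting this, for $x,y\in\pd D$ at shorter-arc distance $s\in[0,\pi]$ the triangle inequality gives $\pi=d_\phi(x,x^*)\le d_\phi(x,y)+d_\phi(y,x^*)\le d_\phi(x,y)+(\pi-s)$, so $d_\phi(x,y)\ge s$. Reversibility of $\phi$ is used precisely in this step: it guarantees that $d_\phi$ is a genuine symmetric metric and that the two boundary arcs joining two points can be traversed in either direction at the same cost, which is why the argument (and the statement) requires symmetry.

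Finally, take $\phi_0=g_0$ to be the round metric of the unit hemisphere $S^2_+$, regarded as a reversible Finsler metric on $D$ with $\pd D$ identified isometrically with the equator (both circles of length $2\pi$). Then $g_0$ has minimal geodesics, because a geodesic arc contained in the open hemisphere is a great-circle arc of length $<\pi$, hence minimizing on the whole sphere and therefore among paths in the hemisphere; and its boundary distance function is $bd_{g_0}(x,y)=s(x,y)$, the shorter equatorial arc, since the great circle through two equator points is the equator itself. By the previous step $d_\phi\ge bd_{g_0}$ on $\pd D\times\pd D$, so Theorem~\ref{main} yields $\vol_2^{ht}(D,\phi)\ge\vol_2^{ht}(D,g_0)$. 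For a Riemannian metric the Holmes--Thompson volume coincides with the Riemannian area (the unit co-disc bundle of a Riemannian surface has Liouville volume $\pi\cdot\area$, and the normalization divides by $\pi$), so $\vol_2^{ht}(D,g_0)=\area(S^2_+,g_0)=2\pi$. Combining with $\vol_2^{ht}(D,\phi)=\vol_2^{ht}(\RP^2,\phi)$ and undoing the rescaling gives $\vol_2^{ht}(\RP^2,\phi)\ge\frac2\pi\,\syspi_1(\RP^2,\phi)^2$. The only non-formal ingredients are the topological noncontractibility claim above and the identity $\vol_2^{ht}=\area$ for Riemannian metrics; I do not expect either to be a real obstacle, since all the substance of the theorem is already contained in Theorem~\ref{main}.
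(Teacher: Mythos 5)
Your proposal is correct, but it is not the route the paper actually follows in Section~\ref{sec-pu}. What you do is exactly the reduction sketched in the introduction (\S1.3): normalize so $\syspi_1=\pi$, cut along a shortest noncontractible loop to get a disc with boundary of length $2\pi$, verify $d_\phi(x,y)\ge s(x,y)$ via the triangle inequality and the key fact $d_\phi(p,p^*)=\pi$, compare against the round hemisphere $\phi_0=g_0$, and invoke Theorem~\ref{main}. The paper explicitly acknowledges that Theorem~\ref{pu} follows this way from Theorem~\ref{main}, but then deliberately gives a \emph{direct} proof in Section~\ref{sec-pu} that bypasses the heavy construction of Section~\ref{sec-mainproof}: after the same cut-open reduction, it chooses $n$ equally spaced boundary points $p_1,\dots,p_n$, forms the cyclic map $f=(d_\phi(p_1,\cdot),\dots,d_\phi(p_n,\cdot))$, computes $I(f)$ explicitly from the boundary restriction alone using Lemma~\ref{l:I-by-boundary} (obtaining $I(f)=2\pi(1-2/n)$), and applies Proposition~\ref{I<vol} to get $\vol_2^{ht}\ge 2\pi(1-2/n)$, then lets $n\to\infty$. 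The direct argument is shorter and self-contained (it needs only \S\S2--3, not the functions $f_p$ of \eqref{specfun} or Lemma~\ref{cyclic-aux}); your deduction is logically clean and shows the theorem as a corollary, at the cost of depending on the full machinery of Theorem~\ref{main}. Your placement of where reversibility enters is accurate, and the minor side facts you flag (the cut locus being null, $\vol_2^{ht}=\area$ for Riemannian metrics, minimality of hemisphere geodesics) are all standard and correct.
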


Although Theorem \ref{pu} follows from Theorem \ref{main},
we prove it directly in section~\ref{sec-pu}. The reason
is that in this case the proof is simpler as it avoids
a complicated construction of section~\ref{sec-mainproof}.

It is well-known (cf.\ e.g.\ \cite{Duran} or \cite{AT})
that  the Busemann volume of any Finsler metric is
no less than its Holmes--Thompson volume,
and they are equal if and only if
the metric is Riemannian.
This fact and Theorem \ref{pu} immediately
imply the following Pu's inequality
for the Busemann area.

\begin{theorem}
Let $\phi$ be a reversible Finsler metric on $\RP^2$.
Then
$$
 \vol_2^b(\RP^2,\phi) \ge \frac2\pi\,\syspi_1(M,\phi)^2
$$
with equality if and only if
$\phi$ is a Riemannian metric of constant curvature.
\end{theorem}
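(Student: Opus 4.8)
The plan is to deduce this statement directly from Theorem \ref{pu} together with the comparison between the two Finsler area functionals, so almost all of the work has already been done. Recall the cited fact (see \cite{Duran} or \cite{AT}) that for every Finsler metric $\phi$ one has $\vol_2^b(\RP^2,\phi)\ge\vol_2^{ht}(\RP^2,\phi)$, with equality precisely when $\phi$ is Riemannian. Note also that $\syspi_1(\RP^2,\phi)$ is a purely metric quantity, the length of the shortest noncontractible loop, and hence does not depend on which area convention is in force; in particular it means the same thing in Theorem \ref{pu} and in the statement to be proved.

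First I would chain the two estimates:
$$
\vol_2^b(\RP^2,\phi)\ \ge\ \vol_2^{ht}(\RP^2,\phi)\ \ge\ \frac2\pi\,\syspi_1(\RP^2,\phi)^2 ,
$$
the first inequality being the volume comparison recalled above and the second being Theorem \ref{pu}. This already yields the asserted inequality. For the equality case, suppose $\vol_2^b(\RP^2,\phi)=\frac2\pi\,\syspi_1(\RP^2,\phi)^2$. Then both inequalities in the chain are forced to be equalities. Equality in the first one forces $\phi$ to come from a Riemannian metric $g$; for such a metric $\vol_2^b(\RP^2,\phi)=\vol_2^{ht}(\RP^2,\phi)=\area(\RP^2,g)$, so equality in the second one reads $\area(\RP^2,g)=\frac2\pi\,\syspi_1(\RP^2,g)^2$, which by Pu's theorem (the Theorem$^*$ above) holds if and only if $g$ has constant curvature. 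Conversely, if $\phi$ is Riemannian of constant curvature then $\vol_2^b=\vol_2^{ht}=\area$, and Pu's theorem supplies equality in $\area\ge\frac2\pi\,\syspi_1^2$, so equality holds throughout the chain.

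Since every ingredient — Theorem \ref{pu}, the volume comparison, and the rigidity clause of Pu's classical theorem — is already in hand, I do not expect a genuine obstacle. The only point requiring a little care is the bookkeeping in the equality case: observing that a single equality in the combined bound propagates to equalities in both constituent inequalities, so that the ``only if'' direction really does use the rigidity of the volume comparison (to conclude that $\phi$ is Riemannian) and then the rigidity in Pu's theorem (to conclude constant curvature).
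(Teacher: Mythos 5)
Your proof is correct and is exactly the argument the paper has in mind: the paper's own justification is the single sentence that the Busemann--Holmes--Thompson comparison together with Theorem \ref{pu} ``immediately imply'' the result, and your chaining of the two inequalities, with the equality case first forcing $\phi$ Riemannian (via the rigidity of the volume comparison) and then forcing constant curvature (via the classical Pu rigidity), is precisely the intended unpacking.
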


\subsection{Remarks and open questions}

1. It remains unclear whether Riemannian metrics with minimal
geodesics on the disc are minimal fillings in a stronger sense,
that is, within the class of Riemannian metrics on surfaces
of arbitrary genus. This is not known even in the case
of the hemisphere although the question dates back to
Gromov's paper \cite{frm}. The proof in this paper
does not work for surfaces of higher genus since it
uses the Jordan Curve Theorem in several places
(most importantly, in Lemma \ref{cyclic-dir}
and Lemma \ref{cyclic-aux}).

2. One of the motivating reasons to study minimal fillings
is their relation to boundary rigidity problems.
A metric $g$ is said to be boundary (distance) rigid
if its boundary distance function determines the metric
uniquely up to an isometry. It is conjectured
(cf.\ e.g.\ \cite{Michel} and \cite{Croke91})
that all metrics with strongly minimal geodesics
(that is, minimal and having no conjugate points up to and including
endpoints at the boundary) are boundary rigid.
In dimension 2 this conjecture was proved
(for metrics with convex boundaries)
by Pestov and Uhlmann \cite{pestov-uhlmann}.
A promising approach to boundary rigidity is
studying the case of equality in the filling
inequality \eqref{riem-conclu},
cf.\ \cite{BI-annals} for a successful application of this approach.
Unfortunately the proof of Theorem \ref{riem} in this paper
does not suggest a way to study the equality case
because of the Finslerian nature of the proof and
non-rigidity of Finsler metrics.

3. One easily sees that the assumption about $\phi_0$
in Theorem \ref{main} is the
weakest possible: a metric with a non-minimal geodesic
cannot be a minimal filling in the Finsler category
since it can be altered so as to reduce the volume
while preserving the boundary distances
(similarly to the proof of the equality case
in section~\ref{sec-mainproof}).
This argument does not work in the Riemannian category,
and this raises the following
question: are there Riemannian minimal fillings
with non-minimal geodesics?
Probably the simplest example to study is
the product metric on $S^1\times[0,1]$.

\section{Finsler metrics, geodesics and directions}
\label{sec-finsler}

%This section contains preliminaries from
%Finsler geometry (cf. \cite{Shen} for details
%and proofs) and a couple of technical lemmas.

Let $\phi$ be a Finsler metric on $D$.
We omit dependence on $\phi$ in most terms
and notations.
For $x\in D$, we denote by $B_x$ and $U_x$
the unit ball and the unit sphere of
the norm $\phi_x=\phi|_{T_xD}$, that is,
$$
\begin{aligned}
 B_x &= \{v \in T_xD : \phi(v) \le 1 \}, \\
 U_x &= \{v \in T_xD : \phi(v) = 1 \}.
\end{aligned}
$$
Note that $B_x$ is a convex set whose
boundary $U_x$ is a smooth strictly convex curve.

The length $L_\phi(\gamma)$ of a piecewise smooth
curve $\gamma:[a,b]\to D$ is defined by
$$
 L_\phi(\gamma) = \int_a^b \phi(\dot\gamma(t))\,dt
$$
where the velocity vector $\dot\gamma(t)$ is regarded as
an element of $T_{\gamma(t)}D$. The distance function
$d_\phi:D\times D\to\R_+$ is defined by
$
 d_\phi(x,y) = \inf_\gamma L_\phi(\gamma)
$
where the infimum is taken over all piecewise smooth curves $\gamma$
starting at $x$ and ending at~$y$.
Note that $d_\phi$ is not symmetric (unless $\phi$ is reversible)
but it has other standard properties of a distance,
in particular the triangle inequality
\begin{equation}
\label{trineq}
d_\phi(x,y)+d_\phi(y,z)\ge d_\phi(x,z) .
\end{equation}
Once the distance is defined, the length functional
extends to all continuous curves in a usual way,
and a standard compactness argument shows that every
pair of points $x,y\in D$ can be connected by a shortest path,
i.e.\ is a curve from $x$ to $y$ whose length equals $d_\phi(x,y)$.
Note that a pointwise limit of shortest paths is a shortest path
due to lower semi-continuity of length.

A \textit{geodesic} is a curve which is contained in the interior
of $D$ except possibly the endpoints and is a critical point
of the energy functional
$\gamma\mapsto \int \phi^2(\gamma(t))\,dt$.
Finsler geodesics have standard properties
such as existence and uniqueness of a geodesic
with a given initial velocity and the fact that
every shortest path in the interior of $D$ is
a geodesic (and hence smooth). The only notable
difference from Riemannian geodesics is that
reversing direction may turn a geodesic
into a non-geodesic.
Unless otherwise stated, all geodesics and
shortest paths are assumed parameterized by arc length.

As usual, $T^*D$ denotes the co-tangent bundle of $D$;
an element of $T^*_xD\subset T^*D$
is a linear function on $T_xD$.
%By $\phi^*$ we denote the dual metric on $T^*D$,
The dual metric $\phi^*:T^*D\to\R_+$ is defined by
$$
 \phi^*(u) = \sup\{u(v): v\in U_x\} \qquad\text{for $u\in T^*_xD$, $x\in M$} .
$$
The definition implies that $\phi^*|_{T_x^*D}$ is a
(possibly non-symmetric) norm on $T_x^*D$.
We denote by $B^*_x$ and $U^*_x$ the unit ball and the unit
sphere of this norm, that is,
$$
\begin{aligned}
 B^*_x &= \{u \in T^*_xD : \phi^*(u) \le 1 \}, \\
 U^*_x &= \{u \in T^*_xD : \phi^*(u) = 1 \}.
\end{aligned}
$$
Note that $B_x$ and $B_x^*$ are polar to each other.

Recall that the co-tangent bundle $T^*D$ carries
a canonical (four-dimensional) volume form.
By definition, the Holmes--Thompson area $\vol_2^{ht}(D,\phi)$
equals the canonical volume of the set
$B^*D=\bigcup_{x\in D} B^*$, divided by~$\pi$.
In coordinates $(x_1,x_2)$ on $D$, this
can be written as
\begin{equation}
\label{ht}
 \vol_2^{ht}(D,\phi) = \frac1\pi\int_D |B^*_x|\,dx_1dx_2
\end{equation}
where $|B^*_x|$ is the coordinate Lebesgue measure
of the set $B^*_x\subset T^*_xD\simeq\R^2$.
In the sequel we always use this coordinate
formula rather than any of the invariant expressions
for the Holmes--Thompson area.

The \textit{Legendre transform} associated with $\phi$
is a norm-preserving positively homogeneous map
$\L=\L_\phi:TD\to T^*D$ that can be defined
as follows: for every $x\in M$ and $v\in U_x$,
$\L(v)$ is the unique co-vector $u\in U^*_x$
such that $u(v)=1$.
Strict convexity of $\phi$ implies that
$\L$ is a diffeomorphism between $U_x$ and $U^*_x$.

\begin{definition}
Let $x\in D\setminus\pd D$ and $y\in D$.
Consider a shortest path $\gamma:[0,T]\to D$
from $x$ to~$y$. Since $x\notin\pd D$, $\gamma$
is a geodesic near $x$ and hence is differentiable at~0.
We refer to the initial velocity vector $\dot\gamma(0)$
of $\gamma$
as a \textit{direction to $y$ at $x$} and denote
it by $\vect{xy}$.
If all shortest paths from $x$ to $y$ have the
same initial velocity, we say that
$\vect{xy}$ is \textit{uniquely defined}.
%Note that $\vect{xy}\in U_x$.

Similarly, if $\gamma:[0,T]\to D$ is a shortest
path from $y$ to $x$, we refer to the vector
$\dot\gamma(T)$ as the \textit{direction from $y$ at $x$}
and denote it by $\rvect{xy}$;
and we say that $\rvect{xy}$ is uniquely defined
if this vector is the same for all such paths.
\end{definition}

\begin{lemma}\label{min-unique}
Suppose that $\phi$ is a metric with minimal geodesics.
Then for every pair of distinct points
$x\in D\setminus\pd D$ and $y\in D$,
the directions $\vect{xy}$ and $\rvect{xy}$
are uniquely defined.
\end{lemma}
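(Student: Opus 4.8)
The plan is to argue by contradiction using the assumption that geodesics are minimal, exploiting the strict convexity of the norms $\phi_x$. First I would treat the direction $\vect{xy}$. Suppose there are two shortest paths $\gamma_1,\gamma_2:[0,T]\to D$ from $x$ to $y$ with $\dot\gamma_1(0)\neq\dot\gamma_2(0)$. Since $x\notin\pd D$, both curves are geodesics near $0$, so near $x$ they are two distinct geodesic arcs emanating from $x$; by uniqueness of geodesics with given initial velocity, after leaving $x$ they separate and the two arcs meet again only at $y$ (or possibly coincide on an initial segment before splitting — but parametrization by arc length and differentiability at $0$ force $\dot\gamma_i(0)$ to be determined by any common initial segment, so if the initial velocities differ the arcs are distinct from $0$ onward).

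The key step is the following strict-convexity argument. Consider the concatenation $\gamma_1 * \bar\gamma_2$, a closed loop of length $2d_\phi(x,y)$ based at $x$, which bounds (by the Jordan curve theorem, since the arcs are disjoint except at the endpoints $x,y$) a region $\Omega\subset D$. The point $x$ lies on $\pd\Omega$, and because $\dot\gamma_1(0)\neq\dot\gamma_2(0)$, in a neighborhood of $x$ the region $\Omega$ occupies a genuine sector. Now pick a point $x'\in\Omega$ close to $x$, lying strictly inside this sector. The broken path from $x'$ to $x$ to (say) the midpoint $\gamma_1(T/2)$ is strictly longer than a shortest path from $x'$ to $\gamma_1(T/2)$ unless $x'$ lies on the geodesic $\gamma_1$; choosing $x'$ off both geodesics, one obtains for small enough perturbation a path from $x'$ to $y$ that is shorter than the path through $x$, contradicting nothing yet. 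The cleaner route is: replace $x$ by $x'$ slightly displaced into $\Omega$ along a direction strictly between $\dot\gamma_1(0)$ and $\dot\gamma_2(0)$; by the first variation formula the distance $d_\phi(x',y)$ along each of $\gamma_1,\gamma_2$ decreases, but the point is to produce a \emph{short-cut}: the segment $[x,x']$ together with $\gamma_i$ shortened yields two competing shortest paths from $x'$ to $y$ on \emph{both sides}, and iterating or taking a limit forces the existence of a shortest path from $x$ to $y$ whose interior touches itself or uses a geodesic bigon, violating minimality of geodesics (a geodesic bigon contains a non-minimal sub-geodesic). Concretely: minimality of geodesics says every geodesic arc in the interior is globally shortest between its endpoints; a geodesic bigon from $x$ to $y$ would force both sides to have the same length (fine) but then a small geodesic extension past $y$ along one side, compared with going along the other side and the extension, would not be shortest — this is where strict convexity of $U_y$ (hence uniqueness of the direction \emph{into} $y$ up to the obstruction we are analyzing) enters.

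I expect the main obstacle to be making the local-displacement-at-$x$ argument rigorous without circularity: one wants to conclude that two distinct incoming directions at $x$ are incompatible with \emph{all} geodesics being minimal, and the honest mechanism is that a displaced basepoint $x'\in\Omega$ yields a point all of whose shortest paths to $y$ must cross $\pd\Omega=\gamma_1\cup\gamma_2$, forcing (by minimality of geodesics applied to the sub-arcs) a shortest path that runs along part of $\gamma_1$, then along $[x,x']$, then along part of $\gamma_2$ — a broken geodesic, hence not a geodesic, hence not a shortest path, contradiction. For $\rvect{xy}$ the argument is symmetric: two shortest paths from $y$ to $x$ with distinct terminal velocities $\dot\gamma_i(T)$ at the interior point $x$ again bound a region, and displacing $x$ into that region produces a broken-geodesic shortest path; here one uses that $x\notin\pd D$ so that the terminal arcs really are geodesics up to and including $x$ and strict convexity of $U_x$ gives the sector structure. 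Both cases thus reduce to: an interior point cannot be the vertex of a geodesic bigon when all interior geodesics are minimal, which follows from minimality applied to a slightly extended geodesic through that vertex.
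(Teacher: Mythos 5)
Your final sentence isolates the correct core idea --- an interior vertex of a geodesic bigon leads to a contradiction by extending a geodesic past that vertex --- and this is indeed the heart of the paper's argument. But the mechanism you propose for getting to such a bigon does not work, and you skip the technical steps that make the extension argument valid.

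First, you assume that $\gamma_1$ and $\gamma_2$ ``meet again only at $y$,'' which need not be true: two distinct shortest paths from $x$ to $y$ can intersect at intermediate points. The paper handles this by taking $z$ to be the nearest common point of $\gamma_1$ and $\gamma_2$ to $x$, and working with the bigon between $x$ and $z$. Without this, the Jordan curve argument is not justified. Second, the ``forcing'' step --- that a shortest path from a displaced point $x'\in\Omega$ to $y$ must run along parts of $\gamma_1$, $[x,x']$, and $\gamma_2$ --- does not follow; such a path could simply cross $\gamma_1$ or $\gamma_2$ transversally. The displacement idea as stated does not produce the broken geodesic you want. Third, at one point you propose extending past $y$, but $y$ may lie on $\partial D$, where extension is unavailable and ``geodesic'' in the paper's sense is undefined; the extension must take place at $x$, which is interior.

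The paper's actual construction is cleaner and fills each of these holes: after reducing to the bigon $\Omega$ between $x$ and $z$, shoot an \emph{auxiliary} geodesic $\gamma$ from $x$ into $\Omega$. Minimality of geodesics forces $\gamma$ to exit $\Omega$ at some point $p\in\partial\Omega$, say on $\gamma_1$. Since $\gamma$ stays in $\Omega$ (which contains no boundary points), it is an interior geodesic, hence a shortest path, and so is the sub-arc $s$ of $\gamma_1$ from $x$ to $p$; hence $L_\phi(s)=L_\phi(\gamma)$. Now extend $\gamma$ backwards a little to $\gamma^-$ (possible because $x$ is interior). The concatenation $\gamma^-\cup\gamma$ is a geodesic, hence shortest, so $\gamma^-\cup s$, having the same length, is also shortest --- but it is not smooth at $x$ because $\dot s(0)=\dot\gamma_1(0)\ne\dot\gamma(0)$, contradiction. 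The auxiliary geodesic $\gamma$ is what guarantees an interior geodesic to extend; your proposal tries to extend the bigon's own sides (or displace the basepoint), which is not what closes the argument.
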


\begin{proof}
We prove uniqueness of $\vect{xy}$,
the case of $\rvect{xy}$ is similar.
Suppose the contrary, then there exist
shortest paths $\gamma_1,\gamma_2:[0,T]\to D$
connecting $x$ to~$y$ such that
$\dot\gamma_1(0)\ne\dot\gamma_2(0)$.
Let $z$ be the nearest to~$x$
common point of $\gamma_1$ and $\gamma_2$.
Then, by Jordan Curve Theorem, the intervals of
$\gamma_1$ and $\gamma_2$ between $x$ and $z$
bound a region $\Omega\subset D$ containing
no points of $\pd D$.
Let $\gamma:[0,T_1]\to D$ be a geodesic starting at $x$
with initial velocity $\dot\gamma(0)$ pointing
into $\Omega$ and extended forward until it reaches the boundary
of $\Omega$
(due to minimality of geodesics, $\gamma$ cannot have infinite length
and hence hits the boundary eventually).
Denote $p=\gamma(T_1)$ and assume for definiteness
that $p$ lies on~$\gamma_1$.
Since $\phi$ is a metric with minimal geodesics,
$\gamma$ is a shortest path.

Let $s$ denote the interval of $\gamma_1$ from $x$ to~$p$.
Then $s$ and $\gamma$ are shortest paths,
hence $L_\phi(s)=L_\phi(\gamma)$.
Let $\gamma^-:[-\ep,0]\to D$ be a geodesic
extending $\gamma$ backwards
(that is, $\gamma^-(0)=x$ and $\dot\gamma^-(0)=\dot\gamma(0)$).
Then $\gamma^-\cup\gamma$ is a geodesic and hence a shortest path.
Therefore the curve $\gamma^-\cup s$ is a shortest path
because it has the same length. But this curve is not smooth at~$0$
since $\dot s(0)=\dot\gamma_1(0)\ne \dot\gamma(0)$,
a contradiction.
\end{proof}

Fix an orientation of $D$.
This orientation induces orientations
and hence cyclic orders on $\pd D$ and the circles
$U_x$ and $U^*_x$ for all $x\in D$.
Note that the Legendre transform
${\L:U_x\to U^*_x}$ is an orientation-preserving
diffeomorphism.

\begin{lemma}\label{cyclic-dir}
Let $x\in D\setminus\pd D$ and $p_1,p_2,p_3\in\pd D$.
Suppose that directions
$\vect{xp_1}$, $\vect{xp_2}$ and $\vect{xp_3}$
are uniquely defined and distinct.
Then the cyclic ordering of the vectors
$\vect{xp_1}$, $\vect{xp_2}$ and $\vect{xp_3}$
in $U_x$ is the same as the cyclic ordering
of the points $p_1$, $p_2$ and $p_3$ in $\pd D$.

The same is true for
$\rvect{xp_1}$, $\rvect{xp_2}$ and $\rvect{xp_3}$,
provided that they are uniquely defined and distinct.
\end{lemma}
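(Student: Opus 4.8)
The plan is to realise the three directions by shortest paths issuing from $x$, to show that these arcs are pairwise disjoint away from $x$, and then to read off both cyclic orders from the embedded tree they form. So, for each $i$ fix a shortest path $\gamma_i\colon[0,T_i]\to D$ from $x$ to $p_i$ parametrised by arc length; by hypothesis $\dot\gamma_i(0)=\vect{xp_i}$, and $\gamma_i$ is a smooth geodesic near $0$ since $x\notin\pd D$. Each $\gamma_i$ is a simple arc, since a shortest path cannot meet itself (a nontrivial loop has positive $\phi$-length and could be excised to produce a shorter path). I claim that $\gamma_i\cap\gamma_j=\{x\}$ whenever $i\ne j$. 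Suppose $\gamma_i(t)=\gamma_j(s)=:y\ne x$; then $t,s>0$, and the initial segments $\gamma_i|_{[0,t]}$ and $\gamma_j|_{[0,s]}$ are shortest paths from $x$ to $y$, so $t=s=d_\phi(x,y)$. Hence the concatenation of $\gamma_j|_{[0,s]}$ with $\gamma_i|_{[t,T_i]}$ is a curve from $x$ to $p_i$ of length $s+(T_i-t)=T_i=d_\phi(x,p_i)$, i.e.\ a shortest path from $x$ to $p_i$; it agrees with $\gamma_j$ near $0$, so its initial velocity is $\dot\gamma_j(0)=\vect{xp_j}$, whereas it must equal $\vect{xp_i}$ because that direction is uniquely defined. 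This gives $\vect{xp_i}=\vect{xp_j}$, a contradiction. (Only the hypothesis of the lemma is used here, not minimality of geodesics.)

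Consequently $\Gamma:=\gamma_1\cup\gamma_2\cup\gamma_3$ is a ``triod'': an embedded tree with one interior vertex $x$, edges $\gamma_1,\gamma_2,\gamma_3$, and leaves $p_1,p_2,p_3\in\pd D$, which near $x$ consists of three smooth arcs leaving $x$ in the distinct directions $\vect{xp_i}$. By a preliminary isotopy of the $\gamma_i$ rel their endpoints, keeping them pairwise disjoint and fixed near $x$ — which affects neither the initial directions nor the points $p_i$, hence neither of the cyclic orders in question — I may also assume that each $\gamma_i$ meets $\pd D$ only at $p_i$. Then $\Gamma$ divides $D$ into exactly three regions, by the Jordan curve theorem (or an Euler characteristic count, cutting $D$ successively along $\gamma_1,\gamma_2,\gamma_3$). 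Write $\Omega_k$ for the region not adjacent to the edge $\gamma_k$, so that $\pd\Omega_k$ consists of $\gamma_i$, $\gamma_j$ and an arc $\alpha_k\subset\pd D$ joining $p_i$ to $p_j$, where $\{i,j,k\}=\{1,2,3\}$. Since $p_k$ is the endpoint of the edge $\gamma_k$, near $p_k$ the disc is met only by the two regions $\Omega_i$ and $\Omega_j$ adjacent to $\gamma_k$, so $p_k\notin\overline{\Omega_k}$; hence $\alpha_k$ is the one of the two arcs of $\pd D$ from $p_i$ to $p_j$ that does not contain $p_k$.

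It follows that, traversing $\pd D$, the arcs $\alpha_1,\alpha_2,\alpha_3$ are met in the same cyclic order in which the three sectors cut out by $\Gamma$ near $x$ are met going around $U_x$ — both cyclic orders being those induced by the fixed orientation of $D$; this matching is a purely planar fact, checked on the model of a round disc cut by three of its radii. By the definition of the sectors, the latter order is precisely the cyclic order of $\vect{xp_1},\vect{xp_2},\vect{xp_3}$ in $U_x$, and since $\alpha_k$ runs from $p_i$ to $p_j$ avoiding $p_k$, the former is precisely the cyclic order of $p_1,p_2,p_3$ in $\pd D$. This proves the first assertion. For the directions $\rvect{xp_i}$ one argues identically with shortest paths $\delta_i$ from $p_i$ to $x$ (for which $\dot\delta_i(T_i)=\rvect{xp_i}$): the same cut-and-paste gives $\delta_i\cap\delta_j=\{x\}$, and near $x$ the arcs $\delta_i$ leave $x$ in the directions opposite to the $\rvect{xp_i}$; since the antipodal map preserves cyclic order on the circle of directions at $x$, the conclusion follows as before.

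The step I expect to demand the most care is the planar-topology bookkeeping of the last two paragraphs — verifying that $\Gamma$ really cuts $D$ into three regions placed as claimed and, above all, matching the two orientation-induced cyclic orders — together with the preliminary isotopy, needed because for a non-convex boundary a realising shortest path may a priori graze $\pd D$.
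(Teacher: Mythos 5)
Your proof is correct and follows essentially the same route as the paper: realize the directions by shortest paths, use the cut-and-paste argument to show the paths are pairwise disjoint away from $x$, and then invoke planar topology to match the cyclic order at $x$ with the cyclic order on $\pd D$. You simply carry out the topological bookkeeping (the triod, the three complementary regions, the preliminary isotopy off $\pd D$) and the reduction for $\rvect{xp_i}$ (via the orientation-preserving antipodal map on directions) in more detail than the paper, which asserts the topological step in a single sentence and dismisses the reversed case as ``similar''.
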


\begin{proof}
We will prove the first statement, the second one is similar.
For every $i=1,2,3$, let $\gamma_i:[0,T_i]\to D$ be a shortest path
from $x$ to $p_i$ to $x$. Then $\dot\gamma_i(0)=\vect{xp_i}$.

We claim that the curves $\gamma_1$, $\gamma_2$ and $\gamma_3$ have no common
points except $x$. Indeed, suppose that $\gamma_1$ and $\gamma_2$
have a common point $q\ne x$.
Let $s_1$ and $s_2$ denote the intervals of $\gamma_1$ and $\gamma_2$
between $x$ and~$q$. Since $s_1$ and $s_2$ are intervals of shortest paths,
they are shortest paths themselves. In particular,
$L_\phi(s_1)=L_\phi(s_2)=d_\phi(x,q)$.
Consider a new curve $\gamma$ composed from $s_2$ and the segment of $\gamma_1$
between $q$ and $p_1$. Since $L_\phi(s_1)=L_\phi(s_2)$,
we have $L_\phi(\gamma)=L_\phi(\gamma_1)$, hence $\gamma$
is another shortest path from $x$ to $p_1$.
However the initial velocity of $\gamma$
equals $\dot\gamma_2(0)=\vect{xp_2}\ne\vect{xp_1}$, contrary
to the assumption that $\vect{xp_1}$
is uniquely defined. The claim follows.

Since the curves $\gamma_1$, $\gamma_2$ and $\gamma_3$
have no common points except $x$ and our space $D$
is a topological 2-disc, the cyclic ordering
of the points $p_1$, $p_2$ and $p_3$ on $\pd D$
is the same as that of the intersections
$\gamma_1$, $\gamma_2$ and $\gamma_3$ with a small circle
centered at~$x$, and the latter is the same as the
cyclic ordering of the initial velocity vectors
$\dot\gamma_1(0)$, $\dot\gamma_2(0)$ and $\dot\gamma_3(0)$.
\end{proof}

\begin{definition}\label{fwlip}
A function $f:D\to\R$ is said to be \textit{forward 1-Lipschitz}
(with respect to $\phi$) if
$
 f(y) - f(x) \le d_\phi(x,y)
$
for all $x,y\in D$.
\end{definition}

\begin{example}\label{lip-dist}
For any $p\in D$, the functions $x\mapsto d_\phi(p,x)$
and $x\mapsto -d_\phi(x,p)$ are forward 1-Lipschitz.
The requirement of Definition \ref{fwlip} follows from
the triangle inequality \eqref{trineq}.
\end{example}

Obviously every forward 1-Lipschitz function is Lipschitz
in any local coordinates.
Hence, by Rademacher's theorem, such a function is differentiable almost
everywhere. We denote the derivative of $f$ at $x\in D$
by $d_xf$ and the map $x\mapsto d_xf\in T^*_xD$ by $df$.
We regard $df$ as a differential 1-form on $D$ with
Borel measurable coefficients (defined a.~e.).

Note that $\phi^*(d_xf)\le 1$ if $f$ is a forward 1-Lipschitz
function differentiable at~$x$.

\begin{lemma}\label{dist-grad}
Let $f:D\to\R$ be a forward 1-Lipschitz function
and $x\in D\setminus\pd D$.
Suppose that $f$ is differentiable at~$x$. Then

1. If a point $y\in D$ satisfies
\begin{equation}
\label{dl1}
 f(y) = f(x) + d_\phi(x,y) ,
\end{equation}
then $\vect{xy}$ is uniquely defined
and $d_xf = \L(\vect{xy})\in U^*_x$.

2. If a point $y\in D$ satisfies
\begin{equation}
\label{dl2}
 f(x) = f(y) + d_\phi(y,x) ,
\end{equation}
then $\rvect{xy}$ is uniquely defined
and $d_xf = \L(\rvect{xy})\in U^*_x$.
\end{lemma}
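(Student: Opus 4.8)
The plan is to prove part~1 in full and obtain part~2 by a parallel argument with ``forward'' replaced by ``backward''. So fix $f$, $x$ and $y$ as in part~1 and choose any shortest path $\gamma\colon[0,T]\to D$ from $x$ to $y$, parameterized by arc length, so that $T=d_\phi(x,y)$. The first step is to show that $f$ grows at unit rate along $\gamma$, i.e.\ $f(\gamma(t))=f(x)+t$ for all $t\in[0,T]$. I would get this by a squeezing argument: since $f$ is forward $1$-Lipschitz and an initial segment of a shortest path is a shortest path, $f(\gamma(t))-f(x)\le d_\phi(x,\gamma(t))=t$ and $f(y)-f(\gamma(t))\le d_\phi(\gamma(t),y)=T-t$; adding these and using hypothesis \eqref{dl1}, namely $f(y)-f(x)=d_\phi(x,y)=T$, forces equality in both, which is the claim.

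Next I would compute $d_xf$ on $v:=\dot\gamma(0)=\vect{xy}$. Since $x\notin\pd D$, the path $\gamma$ is a geodesic near $x$, hence smooth there, with $\phi(v)=1$. Because $f$ is differentiable at $x$ and $\gamma$ is differentiable at $0$, the right derivative of $t\mapsto f(\gamma(t))$ at $t=0$ equals $d_xf(v)$; by the previous step this derivative is $1$, so $d_xf(v)=1$. On the other hand, $f$ being forward $1$-Lipschitz and differentiable at $x$ gives $\phi^*(d_xf)\le1$, while by definition of the dual norm $\phi^*(d_xf)=\sup\{d_xf(w):w\in U_x\}\ge d_xf(v)=1$. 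Hence $\phi^*(d_xf)=1$, i.e.\ $d_xf\in U^*_x$, and, being a covector of dual norm $1$ with $d_xf(v)=1$ and $v\in U_x$, it is by definition the Legendre transform of $v$: $d_xf=\L(v)=\L(\vect{xy})$.

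To see that $\vect{xy}$ is uniquely defined, I would take an arbitrary shortest path $\gamma'$ from $x$ to $y$ and set $v':=\dot{\gamma'}(0)\in U_x$. Repeating the two steps above with $\gamma'$ in place of $\gamma$ gives $d_xf(v')=1=\max_{w\in U_x}d_xf(w)$. Since $U_x$ is strictly convex (condition (4) in the definition of a Finsler metric), a nonzero linear functional attains its maximum over $U_x$ at exactly one point, so $v'=v$. For part~2 I would instead pick a shortest path $\gamma\colon[0,T]\to D$ \emph{from $y$ to $x$}, obtain $f(\gamma(t))=f(y)+t$ by the same squeezing using \eqref{dl2}, and read off the left derivative of $f\circ\gamma$ at $t=T$; this yields $d_xf(\dot\gamma(T))=1$ with $\dot\gamma(T)=\rvect{xy}$, after which the argument is identical.

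I do not expect a genuine obstacle here: the proof is elementary bookkeeping around the squeezing inequality. The two points that need care are that differentiability of $f$ \emph{at the single point} $x$ is exactly enough to evaluate the directional derivative along the smooth curve $\gamma$, and that strict convexity of the indicatrix $U_x$ is invoked twice---once to identify $d_xf$ with $\L(\vect{xy})$ (resp.\ $\L(\rvect{xy})$) and once to force uniqueness of the direction realizing the maximum of $d_xf$ on $U_x$.
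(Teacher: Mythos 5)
Your argument is correct and is essentially the paper's own proof: you establish $f(\gamma(t))=f(x)+t$ along a shortest path (your squeezing argument is the same reasoning the paper phrases via 1-Lipschitzness of $t\mapsto f(\gamma(t))$ plus the endpoint equality), differentiate at $t=0$ to get $d_xf(v)=1$, combine with $\phi^*(d_xf)\le 1$ to conclude $d_xf=\L(v)$, and then derive uniqueness of $v$ from strict convexity (the paper packages this as $v=\L^{-1}(d_xf)$, but that is the same use of strict convexity since it is what makes $\L$ bijective on $U_x$). Part 2 by the parallel backward argument matches the paper as well.
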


\begin{proof}
Let $y\in D$ satisfy \eqref{dl1}
and let $v$ be a direction to $y$ at~$x$.
Then $v=\dot\gamma(0)$ where
$\gamma:[0,T]\to D$ is a shortest path
from $x$ to $y$ parameterized by arc length.
Since $f$ is forward 1-Lipschitz, the function
$t\mapsto f(\gamma(t))$ is 1-Lipschitz on $[0,T]$.
On the other hand,
$$
f(\gamma(T))-f(\gamma(0))=f(y)-f(x)=d_\phi(x,y)=T ,
$$
therefore 
$f(\gamma(t))=f(x)+t$ for all $t\in[0,T]$.
Hence
$$
 d_xf(v) = \frac d{dt}\Big|_{t=0} f(\gamma(t)) = 1 .
$$
Since $\phi^*(d_xf)\le 1$ and $\phi(v)=1$,
this identity implies that $\phi^*(d_xf)=1$
and $d_xf = \L(v)$.
This determines $v$ uniquely
(namely $v=\L^{-1}(d_xf)$)
and the first assertion of the lemma follows.

The second assertion follows by a similar argument
applied to a shortest path from $y$ to $x$ and
its derivative at the endpoint.
\end{proof}

\begin{cor}\label{dist-grad-cor}
Fix a point $p\in D$ and consider a function
$f:D\to\R$ given by $f(x)=d_\phi(p,x)$.
Then, for every $x\in D\setminus\pd D$ where
$f$ is differentiable, the direction $\rvect{xp}$
is uniquely defined and $d_xf = \L(\rvect{xp})\in U^*_x$.
\end{cor}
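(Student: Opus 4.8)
The plan is to obtain this as an immediate consequence of part~2 of Lemma~\ref{dist-grad}, applied with the target point taken to be $p$ itself. First I would record that $f(x)=d_\phi(p,x)$ is a forward 1-Lipschitz function, which is precisely Example~\ref{lip-dist}, so that Lemma~\ref{dist-grad} applies to $f$. Then I would verify that hypothesis~\eqref{dl2} of part~2 of the lemma, namely $f(x)=f(y)+d_\phi(y,x)$, holds with $y=p$: since $f(p)=d_\phi(p,p)=0$, its right-hand side is $0+d_\phi(p,x)=f(x)$, so the identity is nothing but the definition of $f$ and holds for every $x$. Lemma~\ref{dist-grad}(2) then delivers at once that, at each $x\in D\setminus\pd D$ where $f$ is differentiable, the direction $\rvect{xp}$ is uniquely defined and $d_xf=\L(\rvect{xp})\in U^*_x$, which is exactly the statement of the corollary.

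There is no genuine difficulty here; the only points worth a moment's care are matters of bookkeeping. The vector $\rvect{xp}$ is meaningful because the base point $x$ is interior --- this is the assumption $x\in D\setminus\pd D$ --- while no hypothesis is placed on $p$, which in particular may lie on $\pd D$. And although one might worry about the degenerate case $x=p$ (where $\L(\rvect{xp})$ would have to be the zero covector, contradicting membership in $U^*_x$), this case never arises: $f=d_\phi(p,\cdot)$ is not differentiable at $p$, since to first order near $p$ it agrees with the norm $\phi_p$, which has a corner at the origin. I therefore expect the author's argument to be just this one-line reduction to Lemma~\ref{dist-grad}.
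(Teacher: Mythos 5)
Your proposal is correct and matches the paper's own proof exactly: the paper also observes that $f$ is forward 1-Lipschitz by Example~\ref{lip-dist} and that $y=p$ satisfies \eqref{dl2}, then invokes Lemma~\ref{dist-grad}(2). Your extra remarks about $p$ possibly lying on $\pd D$ and the non-differentiability at $x=p$ are sound but not needed in the paper's terse two-line proof.
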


\begin{proof}
The function $f$ is forward 1-Lipschitz,
cf.\ Example \ref{lip-dist}.
For every $x\in D$, the point $y=p$
satisfies \eqref{dl2} from Lemma \ref{dist-grad},
hence the result.
\end{proof}

\section{Cyclic maps}
\label{sec-cyclic}

\begin{definition}\label{def-cyclic}
A Lipschitz map $f:D\to\R^n$, $f=(f_1,f_2,\dots,f_n)$ is said to be \textit{cyclic}
(with respect to $\phi$) if the derivatives of its coordinate
functions $f_i$ satisfy the following:

(1) if $x\in D\setminus\pd D$ and $f_i$ is differentiable at $x$,
then $d_xf_i\in U^*_x$;

(2) for every $x\in D\setminus\pd D$ and $i,j,k\in\{1,\dots,n\}$
such that $i<j<k$ and
the derivatives $d_xf_i$, $d_xf_j$, $d_xf_k$ are
well-defined and distinct,
the cyclic ordering of the triple $(d_xf_i, d_xf_j, d_xf_k)$
in~$U^*_x$ is positive.
\end{definition}

Note that the second requirement depends only on
the cyclic order of coordinate functions. Thus
if a map $(f_1,f_2,\dots,f_n)$ is cyclic,
then so is $(f_2,f_3,\dots,f_n,f_1)$.

\begin{example}\label{cyclic-dist}
Let $p_1,p_2,\dots,p_n$ a be cyclically ordered collection
of points in $\pd D$.
Define $f_i(x) = d_\phi(p_i,x)$ for all $x\in D$, $i\le n$.
Then the map $f=(f_1,f_2,\dots,f_n)$ is cyclic.
\end{example}

\begin{proof}
If $f_i$ is differentiable at $x\in D\setminus\pd D$,
then $d_xf_i=\L(\rvect{xp_i})\in B^*_x$
by Corollary \ref{dist-grad-cor}.
Since $\L$ is an orientation-preserving
diffeomorphism between $U_x$ and $U^*_x$,
the cyclic ordering of any triple
$(d_xf_i,d_xf_j,d_xf_k)$ in $U^*_x$
is the same as that of the triple
$(\rvect{xp_i},\rvect{xp_j},\rvect{xp_k})$ in $U_x$.
By Lemma \ref{cyclic-dir}, the latter is
the same as the cyclic ordering of $(p_i,p_j,p_k)$
in~$\pd D$, provided that these (co-)vectors are
well-defined and distinct.
And the cyclic ordering of $(p_i,p_j,p_k)$
is positive if $i<j<k$.
\end{proof}

\begin{definition}
For a Lipschitz map $f=(f_1,\dots,f_n):D\to\R^n$,
define
\begin{equation}
\label{I}
I(f) = \frac1{2\pi}\int_D \sum_{i=1}^n df_i \wedge df_{i+1} .
\end{equation}
Here and in the sequel all indices like $i+1$ are taken modulo $n$.
\end{definition}

\begin{lemma}\label{l:I-by-boundary}
$I(f)$ is determined by the
restriction of $f$ to the boundary.
That is, if $f$ and $\overline f$ are Lipschitz
maps from $D$ to $\R^n$ and $f|_{\pd D}=\overline f|_{\pd D}$,
then $I(f)=I(\overline f)$.
\end{lemma}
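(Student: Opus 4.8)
The statement is that $I(f)=\frac1{2\pi}\int_D\sum_i df_i\wedge df_{i+1}$ depends only on $f|_{\pd D}$. Since $I$ is additive in the sense of being a sum of pairwise terms, it suffices to prove the analogous statement for each single term: if $g,h:D\to\R$ are Lipschitz and $g|_{\pd D}=\bar g|_{\pd D}$, $h|_{\pd D}=\bar h|_{\pd D}$, then $\int_D dg\wedge dh=\int_D d\bar g\wedge d\bar h$. The plan is to reduce this to a Stokes-type identity: the 2-form $dg\wedge dh$ is exact, namely $dg\wedge dh=d(g\,dh)$ (at least formally), so $\int_D dg\wedge dh=\int_{\pd D}g\,dh$, which manifestly depends only on boundary values. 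The only real work is justifying Stokes' theorem at this low regularity, where $g,h$ are merely Lipschitz and the forms have only $L^\infty$ coefficients.

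First I would record the elementary algebraic observation that
$$
\sum_{i=1}^n df_i\wedge df_{i+1} = \sum_{i=1}^n d\bigl(f_i\,df_{i+1}\bigr)
$$
in the sense of distributions, using that $d(df_{i+1})=0$ (this holds for Lipschitz functions: the distributional exterior derivative of an exact 1-form vanishes, since mixed weak partials commute for $W^{1,\infty}$ functions). Hence, modulo the regularity issue, $I(f)=\frac1{2\pi}\int_{\pd D}\sum_i f_i\,df_{i+1}$, and the right-hand side depends only on the restrictions $f_i|_{\pd D}$. This reduces the lemma to a clean statement: \emph{Stokes' theorem $\int_D d\omega=\int_{\pd D}\omega$ holds for 1-forms $\omega$ whose coefficients are bounded and measurable and which admit a distributional exterior derivative that is an $L^1$ 2-form.} In our case $\omega=\sum_i f_i\,df_{i+1}$ has coefficients that are products of a Lipschitz function and an $L^\infty$ function, hence in $L^\infty$, and $d\omega=\sum_i df_i\wedge df_{i+1}\in L^\infty$ as well, so this applies.

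To make the Stokes argument rigorous at Lipschitz regularity, I would mollify: approximate each $f_i$ by smooth functions $f_i^\ep$ (e.g.\ convolution with a mollifier on a slightly shrunk disc, or any standard smoothing that is uniformly Lipschitz and converges in $W^{1,p}$ for all $p<\infty$, with $f_i^\ep\to f_i$ uniformly and $df_i^\ep\to df_i$ in $L^p_{loc}$ and weakly-$*$ in $L^\infty$). For the smooth approximants, classical Stokes gives $\int_D df_i^\ep\wedge df_{i+1}^\ep=\int_{\pd D}f_i^\ep\,df_{i+1}^\ep$. The left side converges to $\int_D df_i\wedge df_{i+1}$ because the product of a weakly-$*$ convergent sequence in $L^\infty$ with a strongly convergent one in $L^1$ converges (here both factors are bounded in $L^\infty$ and one converges strongly in $L^2$). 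The right side, integrated over the 1-dimensional boundary, converges because $f_i^\ep\to f_i$ uniformly on $\pd D$ and $f_{i+1}^\ep\to f_{i+1}$ in $W^{1,1}(\pd D)$ — this is where one must be slightly careful about the mollification respecting the boundary; alternatively one can first extend $f$ to a neighborhood of $D$ preserving the Lipschitz constant (Kirszbraun or a simple reflection/McShane extension) and then mollify freely. Either way one obtains $\int_D df_i\wedge df_{i+1}=\int_{\pd D}f_i\,df_{i+1}$, and summing over $i$ gives $I(f)=\frac1{2\pi}\int_{\pd D}\sum_i f_i\,df_{i+1}$, which is visibly determined by $f|_{\pd D}$. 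Comparing with $\bar f$ finishes the proof.

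The main obstacle is purely the low-regularity justification of Stokes' theorem: verifying that mollification can be arranged to converge simultaneously in the interior (weakly-$*$ in $L^\infty$ for the differentials, so that wedge products pass to the limit) and on the boundary (strongly enough in $W^{1,1}(\pd D)$ that the boundary integral passes to the limit). Once one commits to extending $f$ across $\pd D$ first and then mollifying, both convergences are standard and the argument is routine; the algebraic identity $df_i\wedge df_{i+1}=d(f_i\,df_{i+1})$ and the observation that $\int_{\pd D}f_i\,df_{i+1}$ depends only on boundary data are immediate.
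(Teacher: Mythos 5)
Your proof is correct and uses essentially the same idea as the paper: the integrand is an exact 2-form, so Stokes' theorem reduces $I(f)$ to a boundary integral. The paper packages this more tersely by viewing $\sum df_i\wedge df_{i+1}$ as the pullback under $f$ of the constant-coefficient closed form $\sum dx_i\wedge dx_{i+1}$ on $\R^n$ and invoking integration of closed forms over Lipschitz singular chains, whereas you justify the low-regularity Stokes' theorem directly by Lipschitz extension and mollification; the substance is the same.
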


\begin{proof}
The (Borel measurable) 2-form $\sum df_i \wedge df_{i+1}$
on $D$ is induced by $f$
from a 2-form $\omega=\sum dx_i\wedge dx_{i+1}$ on $\R^n$.
Hence the integral in \eqref{I} is the integral of $\omega$
over a Lipschitz singular chain defined by $f$. This integral
is determined by $f|_{\pd D}$ since $\omega$ is closed.
\end{proof}

\begin{remark}\label{r:I-by-boundary}
Since the 2-form $\omega$ in the above proof
is the exterior derivative
of a 1-form $\sum x_i\, dx_{i+1}$, one can explicitly
rewrite $I(f)$ using Stokes' formula:
$$
 I(f) = \frac1{2\pi}\int_{\pd D} \sum_{i=1}^n f_i \cdot df_{i+1} .
$$
Each term $\int_D df_i \wedge df_{i+1} = \int_{\pd D} f_i \cdot df_{i+1}$
is the oriented area (with multiplicities) encircled by the loop
$F_i|_{\pd D}$ in the plane where $F_i$ is a map from $D$ to $\R^2$
defined by $F_i(x)=(f_i(x),f_{i+1}(x))$.
\end{remark}

Fix a coordinate system $(x_1,x_2)$ in $D$.
This induces coordinates in the co-tangent bundle $T^*D$,
that is, every fiber $T^*_xD$ is identified with $\R^2$.
For a measurable set $E\subset T_x^*D$
we denote by $|E|$ its Lebesgue measure
w.r.t.\ these coordinates.

For a set $A\subset T^*_xD$, let $\conv(A)$ denote
its convex hull (that is, the least
convex set containing $A$). If $A$ is finite,
then $\conv(A)$ is either a convex polygon
in the plane $T^*_xD\simeq\R^2$
or a line segment or a single point.

\begin{lemma}\label{l-conv}
For a cyclic map $f=(f_1,\dots,f_n):D\to\R^n$ one has
$$
I(f) = \frac1\pi \int_D |\conv\{d_xf_1,\dots,d_xf_n\}|\,dx .
$$
where $dx=dx_1dx_2$ denotes the coordinate integration.
\end{lemma}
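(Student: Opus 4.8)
The plan is to prove the identity pointwise on $D$ and then integrate. By definition $I(f)=\frac1{2\pi}\int_D\sum_{i=1}^n df_i\wedge df_{i+1}$, and we may take the coordinates $(x_1,x_2)$ compatible with the fixed orientation of $D$, so that $df_i\wedge df_{i+1}$ has coefficient $\det(d_xf_i,d_xf_{i+1})$ with respect to $dx_1\wedge dx_2$ (the determinant computed in the induced coordinates on $T^*_xD\cong\R^2$). Thus it suffices to show that for almost every $x\in D$
$$
 \frac12\sum_{i=1}^n \det\bigl(d_xf_i,\,d_xf_{i+1}\bigr)\;=\;\bigl|\conv\{d_xf_1,\dots,d_xf_n\}\bigr| .
$$
First I would note that each $f_i$ is Lipschitz, hence differentiable a.e., so for a.e.\ $x$ — in particular for a.e.\ $x\in D\setminus\pd D$, as $\pd D$ is null — all the covectors $d_xf_i$ exist and the cyclicity conditions (1), (2) are available at such~$x$.

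Fix such an $x$ and write $P_i=d_xf_i\in T^*_xD\cong\R^2$. By condition~(1) each $P_i$ lies on $U^*_x$, and since $B^*_x$ is polar to the smooth strictly convex body $B_x$, the curve $U^*_x$ is strictly convex, bounds $B^*_x$, and contains the origin in its interior. The left-hand side above is the shoelace sum $\frac12\sum_i\det(P_i,P_{i+1})$, which for any fixed $z$ equals $\sum_i\Delta(z;P_i,P_{i+1})$ with $\Delta(z;A,B)=\frac12\det(A-z,B-z)$ the signed area of the triangle $zAB$; equivalently it is the signed area enclosed, counted with winding number, by the closed polygonal loop $P_1\to P_2\to\dots\to P_n\to P_1$.

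The geometric core is the claim that this loop runs along $\pd K$, $K:=\conv\{P_1,\dots,P_n\}$, monotonically and exactly once in the positive direction. Here I would argue that condition~(2) says precisely that the cyclic sequence $(P_1,\dots,P_n)$ is weakly monotone for the cyclic order on $U^*_x$ induced by the orientation — that is, after a cyclic shift of indices the $P_i$ occur in weakly increasing order along $U^*_x$, completing one full turn — which is a routine combinatorial consequence of~(2) (cut $U^*_x$ at a point different from the finitely many $P_i$ and test triples $i<j<k$). If $K$ is degenerate, then by strict convexity of $U^*_x$ at most two of the $P_i$ are distinct and both sides vanish. Otherwise $K$ is a genuine convex polygon; choosing $z\in\operatorname{int}K$ makes $K$ star-shaped about $z$, the angles $\arg(P_i-z)$ are weakly monotone in $i$ and increase by $2\pi$ over one cycle, so every $\Delta(z;P_i,P_{i+1})\ge0$ and these triangles sweep out $K$ exactly once; hence $\sum_i\Delta(z;P_i,P_{i+1})=|K|$. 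This gives the pointwise identity, and integration over $D$ finishes the proof.

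I expect the only real obstacle to be this last geometric step: deducing weak cyclic monotonicity of $(P_1,\dots,P_n)$ along $U^*_x$ from condition~(2), and then checking that the resulting polygonal loop — possibly with repeated vertices and collinear stretches — bounds exactly $K$ with the correct sign. Everything else is the pointwise reduction and the elementary shoelace identity.
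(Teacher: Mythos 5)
Your approach is the same as the paper's: reduce to the pointwise shoelace identity $\tfrac12\sum_i\det(d_xf_i,d_xf_{i+1})=|\conv\{d_xf_i\}|$ and derive it from the cyclic ordering of the gradients on $U^*_x$. Choosing a base point $z$ in the interior of $K$ instead of the origin is a small simplification (every triangle then has nonnegative signed area), and the winding-number framing is a clean way to see the degenerate case.

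However, the step you flag as the ``geometric core'' is both stated too strongly and not actually proved. Condition~(2) does \emph{not} imply that $(P_1,\dots,P_n)$ is weakly cyclically monotone, completing one full turn: the sequence $A,B,A,B$ (with $A\neq B$) satisfies~(2) vacuously --- no triple $i<j<k$ has three distinct covectors --- yet is not monotone around $U^*_x$ and makes zero turns. What is actually true, and what the paper proves, is a dichotomy: after deleting consecutive duplicates, either the remaining $P_i$ are all distinct, in which case~(2) forces them to be positively cyclically ordered and hence the vertices of a convex polygon inscribed in $U^*_x$, traversed once; or some nonconsecutive repetition $P_1=P_k$ occurs, and then~(2) (applied to triples $(1,2,i)$ and $(2,k,i)$) forces every $P_i$ to equal $P_1$ or $P_2$, so the dedup'd sequence alternates between exactly two values, the convex hull is a segment, and the signed triangle areas cancel in pairs. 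Your separate treatment of ``$K$ degenerate'' contains the damage from the overstated monotonicity claim, but the derivation of this dichotomy from condition~(2) is precisely the nontrivial content of the lemma, and your parenthetical sketch (``cut $U^*_x$ and test triples $i<j<k$'') would need to be carried out --- including the case of repeated values, where many triples are unconstrained --- before the proof is complete.
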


\begin{proof}
Let $S_i$ be a Borel measurable function on $D$
defined by the relation
$$
 df_i\wedge df_{i+1} = 2S_i \,dx_1\wedge dx_2 .
$$
Then
$$
 I(f) = \frac1\pi\int_D \sum_{i=1}^n S_i(x) \,dx ,
$$
and it suffices to prove that
\begin{equation}
\label{conv1}
 \sum_{i=1}^n S_i(x) = |\conv\{d_xf_1,\dots,d_xf_n\}|
\end{equation}
for a.e.\ $x\in D$.
We will show that \eqref{conv1} holds for every $x\in D\setminus\pd D$
where $f$ is differentiable.
Fix such a point $x$ and denote $w_i=d_xf_i$
for all $i=1,\dots,n$.
Observe that $S_i(x)=\frac12\cdot\frac{w_1\wedge w_2}{dx_1\wedge dx_2}$
equals the oriented area of the Euclidean triangle
$\Delta_i:=\triangle 0w_iw_{i+1}$ in the plane $T_x^*D\simeq\R^2$.
Hence the left-hand side of \eqref{conv1} equals the sum
of oriented areas of these triangles.

Since $f$ is cyclic, we have $\phi_x^*(w_i)=1$
(cf.\ Definition \ref{def-cyclic}(1)).
Hence the points $w_1,\dots,w_n$ belong to
the convex curve $U^*_x=\pd B^*_x$.
The second requirement of Definition \ref{def-cyclic}
implies that
$w_i$, $w_j$ and $w_k$ are positively ordered
whenever they are distinct and $i<j<k$.

If all the points $w_i$ coincide, then all terms in \eqref{conv1}
are zero. Otherwise we may assume that $w_n\ne w_1$ and
furthermore that $w_{i+1}\ne w_i$ for all~$i$.
Indeed, if $w_{i+1}=w_i$, we can remove $w_{i+1}$ from the list;
this clearly does not change the right- and left-hand side of \eqref{conv1}.

If the points $w_1,\dots,w_n$ are distinct,
then the second requirement of Definition \ref{def-cyclic}
implies that they
are positively cyclically ordered.
That is, they are vertices of a convex polygon inscribed in $U^*_x$,
enumerated according to their cyclic order. Observe that this polygon and
the convex hull in the right-hand side of \eqref{conv1} are the same set.
Now \eqref{conv1} follows from the fact that
the area of this polygon equals the sum of oriented areas
of the triangles $\Delta_i=\triangle 0w_iw_{i+1}$.

It remains to consider the case when some of the points $w_i$
coincide. Recall that $w_{i+1}\ne w_i$ for all $i$.
We may assume that $w_1=w_k$ for some $k$, $2<k<n$.
Then for every $i>k$, the point
$w_i$ coincides with either $w_1$ or $w_2$,
otherwise the triples $(w_1,w_2,w_i)$ and
$(w_2,w_k,w_i)=(w_2,w_1,w_i)$ have opposite
cyclic orderings, contrary to
the definition of a cyclic map.
In particular, $w_{k+1}=w_2$ since $w_{k+1}\ne w_k=w_1$.
Then a similar argument shows that for all $i$ between 1 and $k$
the point $w_i$ coincides with either $w_k=w_1$ or $w_{k+1}=w_2$.

Thus every point $w_i$ coincides with either $w_1$ or $w_2$.
Since $w_{i+1}\ne w_i$ for all $i$,
the sequence $w_1,\dots,w_n$ consists of
alternating $w_1$ and $w_2$
(in particular, $n$ is even).
Then $\conv\{w_1,\dots,w_n\}$ is the line segment
$[w_1,w_2]$ and hence the right-hand side of \eqref{conv1} is zero.
The left-hand side is the sum of oriented areas of
the triangles $\Delta_i$
where $\Delta_i=\triangle 0w_1w_2$ if $i$ is odd
and $\Delta_i=\triangle 0w_2w_1$ if $i$ is even.
These two types of triangles have the same area
but opposite orientations, 
hence the total sum of their oriented areas is zero.
\end{proof}

\begin{prop}\label{I<vol}
$
I(f) \le \vol_2^{ht}(D,\phi)
$
for every cyclic map $f:D\to\R^n$.
\end{prop}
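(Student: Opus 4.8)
The plan is to reduce the inequality to a pointwise comparison of integrands, using Lemma~\ref{l-conv} on the left-hand side and the coordinate formula~\eqref{ht} on the right-hand side. By Lemma~\ref{l-conv},
$$
 I(f) = \frac1\pi \int_D |\conv\{d_xf_1,\dots,d_xf_n\}|\,dx ,
$$
and by~\eqref{ht},
$$
 \vol_2^{ht}(D,\phi) = \frac1\pi\int_D |B^*_x|\,dx ,
$$
where in both cases $dx=dx_1dx_2$ and the integrand is defined for almost every $x\in D$ (the set of $x\in D\setminus\pd D$ at which all $f_i$ are differentiable has full measure by Rademacher's theorem). Hence it suffices to prove that
$$
 |\conv\{d_xf_1,\dots,d_xf_n\}| \le |B^*_x|
$$
for almost every $x$, and then integrate over $D$ and divide by $\pi$.

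To establish the pointwise inequality, I would fix $x\in D\setminus\pd D$ at which every $f_i$ is differentiable. By requirement~(1) of Definition~\ref{def-cyclic}, each $d_xf_i$ lies on $U^*_x=\pd B^*_x$, hence in $B^*_x$. The set $B^*_x$ is convex, being the unit ball of the norm $\phi^*|_{T^*_xD}$, so it contains the convex hull of any subset of itself; in particular $\conv\{d_xf_1,\dots,d_xf_n\}\subseteq B^*_x$. Both sets are measurable in the coordinates on $T^*_xD\simeq\R^2$ induced by $(x_1,x_2)$ (the convex hull is a convex polygon, a segment, or a single point), so their Lebesgue measures satisfy $|\conv\{d_xf_1,\dots,d_xf_n\}|\le|B^*_x|$, which is exactly what is needed.

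There is no genuine obstacle here: the substantive hypothesis, namely the cyclic ordering requirement~(2) of Definition~\ref{def-cyclic}, has already been absorbed into the proof of Lemma~\ref{l-conv}, where it identifies the sum of oriented areas of the triangles $\triangle 0 w_i w_{i+1}$ with the unsigned area of the inscribed convex hull. For the present statement one only needs requirement~(1), which places the gradients on the dual unit sphere, together with convexity of $B^*_x$; the proposition is therefore a one-line pointwise estimate followed by integration.
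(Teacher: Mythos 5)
Your proof is correct and follows essentially the same route as the paper: invoke Lemma~\ref{l-conv} to express $I(f)$ as an integral of the area of the convex hull of the gradients, use Definition~\ref{def-cyclic}(1) and convexity of $B^*_x$ to get the pointwise inclusion $\conv\{d_xf_1,\dots,d_xf_n\}\subset B^*_x$, and integrate. Your closing remark that requirement~(2) of the cyclic-map definition was already spent in establishing Lemma~\ref{l-conv} is accurate and a nice clarification.
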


\begin{proof}
Let $x\in D\setminus\pd D$ be a point where $f$ is differentiable.
Then $\phi^*(d_xf_i)=1$ for all $i=1,\dots,n$
(cf.\ Definition \ref{def-cyclic}(1)),
hence $d_xf_i\in B^*_x$.
Since $B^*_x$ is convex, it follows that
$$
 \conv\{d_xf_1,\dots,d_xf_n\} \subset B_x,
$$
hence
$$
 |\conv\{d_xf_1,\dots,d_xf_n\}| \le |B_x| .
$$
Integrating over $D$ yields
$$
I(f) = \frac1\pi \int_D |\conv\{d_xf_1,\dots,d_xf_n\}|\,dx
\le \frac1\pi\int_D |B^*_x|\,dx =\vol_2^{ht}(D,\phi)
$$
Here the first equality follows from
Lemma \ref{l-conv} and the second one from \eqref{ht}.
\end{proof}

The next proposition is used in section \ref{sec-mainproof}
but not in section \ref{sec-pu}.

\begin{prop}\label{approx-santalo}
If $\phi$ is a metric with minimal geodesics,
then for every $\ep>0$ there exist a cyclically
ordered set of points $p_1,\dots,p_n\in\pd D$
such that the map
$f=(f_1,\dots,f_n):D\to\R^n$ where $f_i(x)=d_\phi(p_i,x)$
(cf.\ Example \ref{cyclic-dist}) satisfies
\begin{equation}
\label{eq-ap-santalo}
 I(f) > \vol_2^{ht}(D,\phi)-\ep .
\end{equation}
\end{prop}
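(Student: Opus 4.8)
The plan is to choose the points $p_1,\dots,p_n$ as a sufficiently fine net on $\pd D$ and to show that the resulting integrand $|\conv\{d_xf_1,\dots,d_xf_n\}|$ approximates $|B^*_x|$ at almost every interior point $x$. By Lemma~\ref{l-conv}, $I(f) = \frac1\pi\int_D |\conv\{d_xf_1,\dots,d_xf_n\}|\,dx$, so once we control the integrand pointwise from below, \eqref{eq-ap-santalo} follows by integration (together with a dominated/monotone convergence argument, since each convex hull lies in $B^*_x$ and $\int_D|B^*_x|\,dx = \pi\,\vol_2^{ht}(D,\phi)<\infty$). Thus the essential point is the pointwise statement: for a.e.\ interior $x$ and for a sufficiently dense net $\{p_i\}$, the inscribed polygon $\conv\{d_xf_1,\dots,d_xf_n\}$ nearly fills $B^*_x$.

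The mechanism for this is Corollary~\ref{dist-grad-cor}: at every interior point $x$ where $f_i$ is differentiable, $d_xf_i = \L(\rvect{xp_i})\in U^*_x$. So the vertices $d_xf_i$ are exactly the Legendre images of the directions $\rvect{xp_i}$ from the net points. The first step is therefore to fix an interior point $x$ where all the $f_i$ happen to be differentiable, and ask how the map $p\mapsto \rvect{xp}\in U_x$ behaves as $p$ ranges over $\pd D$. By Lemma~\ref{cyclic-dir} this map is cyclically monotone wherever it is uniquely defined and injective, which it is by Lemma~\ref{min-unique}. What I would like is that as $p$ traverses the whole boundary circle, $\rvect{xp}$ sweeps out \emph{all} of $U_x$ (or all but an arbitrarily small arc), so that a fine net on $\pd D$ produces a net on $U_x$ fine enough that the inscribed polygon captures all of $B^*_x$ up to area~$\ep/\vol$. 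This is where minimality of geodesics is used decisively: for each direction $v\in U_x$, extend the geodesic from $x$ with velocity $v$ forward until it hits $\pd D$ at some point $p$; minimality guarantees this geodesic is a shortest path, so $v = \vect{xp}$ is a direction \emph{to} a boundary point. The analogous statement for $\rvect{\cdot}$ (running geodesics backward, or equivalently using that every interior point is the endpoint of a shortest path from each boundary point) shows the map $p\mapsto\rvect{xp}$ is onto $U_x$. Hence for a net of boundary points that is $\delta$-dense, the induced vectors are correspondingly dense on $U_x$, and applying the diffeomorphism $\L$ (uniformly bi-Lipschitz on compact pieces, by strict convexity of $\phi$) the vertices $d_xf_i$ are dense on the convex curve $U^*_x$; an inscribed polygon with vertices $\eta$-dense on a convex curve of bounded curvature has area within $O(\eta^2)$ of the enclosed region.

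The main obstacle is making the ``fineness'' uniform in $x$, or at least handling its non-uniformity. The modulus of continuity of $p\mapsto\rvect{xp}$, and the bi-Lipschitz constants of $\L_{\phi,x}$, may degenerate as $x$ approaches $\pd D$ (where $\rvect{xy}$ is not even defined) or wherever $\phi$ is close to non-strictly-convex; a single net $\{p_i\}$ cannot be fine enough for all $x$ simultaneously if these constants blow up. The remedy is the dominated convergence structure noted above: fix a compact $K\subset D\setminus\pd D$ with $\int_{D\setminus K}|B^*_x|\,dx < \ep/2$, obtain uniform estimates on $K$ (where all the relevant quantities are continuous and hence bounded), choose the net fine enough to gain $\ep/2$ on $K$, and absorb the rest into the tail. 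One must also check measurability --- that for a.e.\ $x$ all $n$ functions $f_i$ are simultaneously differentiable (a finite union of null sets) --- and confirm that $\conv\{d_xf_1,\dots,d_xf_n\}\subset B^*_x$ always, which is immediate from $d_xf_i\in U^*_x$ and convexity of $B^*_x$, giving the required domination $|\conv\{\dots\}|\le|B^*_x|$ for the limit argument. I expect the boundary-degeneration issue to be the only genuinely delicate point; everything else is a routine combination of Lemma~\ref{cyclic-dir}, Corollary~\ref{dist-grad-cor}, and elementary convex geometry.
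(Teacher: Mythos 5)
Your proposal is correct and rests on the same mechanism as the paper: Corollary~\ref{dist-grad-cor} identifies $d_xf_i$ with $\L(\rvect{xp_i})$, minimality of geodesics makes $p\mapsto\rvect{xp}$ surjective onto $U_x$ (run the geodesic backward from $x$ to the boundary), and hence a dense enough choice of $p_i$'s makes the inscribed polygon $\conv\{d_xf_i\}$ exhaust $B^*_x$ pointwise, after which $I(f)\to\vol_2^{ht}(D,\phi)$ by a convergence theorem using the domination $|\conv\{\dots\}|\le|B^*_x|$.

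The one place where you diverge is in how the limit is organized, and your route is needlessly heavier. You flag as the ``only genuinely delicate point'' the non-uniformity in $x$ of the modulus of continuity of $p\mapsto\rvect{xp}$ and of the bi-Lipschitz constants of $\L_x$, and propose a compact exhaustion $K\subset D\setminus\pd D$ with uniform quantitative estimates on $K$. The paper sidesteps this entirely: fix once and for all a countable dense subset $Q=\{q_i\}$ of $\pd D$, let the $n$-th map use $\{q_1,\dots,q_n\}$ in cyclic order, and observe that $x\mapsto|\conv\{d_xg_1,\dots,d_xg_n\}|$ increases monotonically in $n$ for every fixed $x$. The monotone convergence theorem then gives $I(f^{(n)})\to\frac1\pi\int_D|\conv\{d_xg_i\}_{i\ge1}|\,dx$ with no uniformity needed, and the pointwise density of $\{\L(\rvect{xq_i})\}$ in $U^*_x$ (from surjectivity and continuity of $\xi:q\mapsto\rvect{xq}$ at the fixed interior point $x$) yields $|\conv\{d_xg_i\}_{i\ge 1}|=|B^*_x|$ a.e. So no curvature bounds on $U^*_x$, no quantitative $O(\eta^2)$ polygon estimate, and no compact exhaustion are required. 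Your plan would also work if carried out carefully (the uniform continuity of $(x,p)\mapsto\rvect{xp}$ on $K\times\pd D$ does hold by Lemma~\ref{min-unique} plus compactness), but the nested-net-plus-monotone-convergence formulation is cleaner and is what the paper does.
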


\begin{proof}
Let $Q=\{q_i\}_{i=1}^\infty$ be a countable dense subset of $\pd D$.
For each $n$, define a map $f=f^{(n)}$ using
points $p_1,\dots,p_n$ obtained from the set $\{q_1,\dots,q_n\}$
by enumeration according to the cyclic order.
We are going to prove that $I(f^{(n)})\to \vol_2^{ht}(D,\phi)$
as $n\to\infty$; then for a sufficiently large $n$
the map $f=f^{(n)}$ satisfies \eqref{eq-ap-santalo}.

Since $f^{(n)}$ is cyclic (cf.\ Example \ref{cyclic-dist}),
Lemma \ref{l-conv} implies that
$$
 I(f^{(n)}) = \frac1\pi \int_D |\conv\{d_xg_1,\dots,d_xg_n\}|\,dx,
$$
where $g_i(x)=d_\phi(q_i,x)$.
As $n\to\infty$, the convex hull in the right-hand side
monotonically converges to the convex hull of the set $\{dg_i\}_{i=1}^\infty$.
Hence, by Levy's theorem,
$$
 \lim_{n\to\infty} I(f^{(n)}) = \frac1\pi \int_D \bigl|\conv(\{d_xg_i\}_{i=1}^\infty)\bigr|\, dx
$$
Taking into account \eqref{ht},
it suffices to prove that
\begin{equation}
\label{santalo-eq2}
 \bigl|\conv(\{d_xg_i\}_{i=1}^\infty)\bigr| = |B^*_x|
\end{equation}
for a.e.\ $x\in D$.
We will show that \eqref{santalo-eq2} holds for every
$x\in D\setminus\pd D$ where all functions $g_i$
are differentiable. Fix such a point $x$.
It suffices to prove that the set $\{d_xg_i\}_{i=1}^\infty$
is dense in $U^*_x$.

By Lemma \ref{min-unique}, for every $q\in\pd D$
the vector $\rvect{xq}$ is uniquely defined
and hence depend continuously on~$q$.
Define a map $\xi:\pd D\to U_x$ by
$\xi(q)=\rvect{xq}$. This map is surjective.
Indeed, for a vector $v\in U_x$ consider 
a geodesic $\gamma$
with initial data $\gamma(0)=x$ and $\dot\gamma(0)=v$ 
extended backwards up to the boundary.
By minimality of geodesics it indeed reaches the boundary
at some point $q=\gamma(-T)$, and then
$v=\rvect{qx}=\xi(q)$.
Since $\xi$ is surjective and $Q$ is dense in $\pd D$,
the set $\xi(Q)=\{\rvect{xq_i}\}_{i=1}^\infty$ is dense in~$U_x$.
By Corollary \ref{dist-grad-cor} we have
$d_xg_i=\L(\rvect{xq_i})$,
hence the set $\{d_xg_i\}_{i=1}^\infty$
is dense in $U^*_x$.
Therefore its convex hull contains the interior of $B^*_x$.
This implies \eqref{santalo-eq2}, and the proposition follows.
\end{proof}

\section{Proof of Pu's inequality}
\label{sec-pu}

The goal of this section is to give a direct proof
of Theorem \ref{pu}.
As explained in the introduction, it suffices
to prove the following:
if $\phi$ is a reversible Finsler metric on $D$
such that $L_\phi(\pd D)=2\pi$ and  for every
$x,y\in\pd D$ the distance $d_\phi(x,y)$ is realized
by an arc of $\pd D$,
then $\vol_2^{ht}(D,g)\ge 2\pi$.

Let $\phi$ be such a metric.
Fix a large positive integer $n$ and let
$p_1,p_2,\dots,p_n\in\pd D$ be a cyclically ordered
collection of points dividing $\pd D$ into $n$ arcs
of length $2\pi/n$.
Define a cyclic map $f:D\to\R^n$ as in Example \ref{cyclic-dist},
namely $f=(f_1,\dots,f_n)$ where $f_i(x)=d_\phi(p_i,x)$.

Our plan is to compute $I(f)$ and then use
Proposition \ref{I<vol} to estimate the area of the metric.
Recall that $I(f)$ can be recovered from the restriction
of $f$ to the boundary (cf.\ Lemma \ref{l:I-by-boundary}).
This restriction does not depend on $\phi$
since $d_\phi|_{\pd D\times\pd D}$ is just the intrinsic
distance of the boundary (which is isometric to the
standard circle of length $2\pi$).

To find $I(f)$, fix an $i\le n$ and consider
a map $F_i=(f_i,f_{i+1}):D\to\R^2$. Then the term
$$
 I_i(f) := \int_D df_i\wedge df_{i+1}
$$
equals the oriented area encircled by
the planar curve $F_i(\pd D)$, cf.\ Remark \ref{r:I-by-boundary}.
Let $s:[0,2\pi]\to\pd D$
be a positively oriented arc-length parameterization
of $\pd D$ such that $s(0)=s(2\pi)=p_i$.
Computing the distances along the circle yields that
$$
F_i(s(t)) =
\begin{cases}
 (t,\tfrac{2\pi}n-t), & t\in [0,\tfrac{2\pi}n], \\
 (t,t-\tfrac{2\pi}n), & t\in[\tfrac{2\pi}n,\pi], \\
 (2\pi-t,t-\tfrac{2\pi}n), & t\in[\pi,\pi+\tfrac{2\pi}n] , \\
 (2\pi-t,2\pi+\tfrac{2\pi}n-t), & t\in[\pi+\tfrac{2\pi}n,2\pi] . \\
\end{cases}
$$
This means that the curve $F_i(\pd D)$
bounds the planar rectangle with vertices
$(0,\tfrac{2\pi}n)$, $(\tfrac{2\pi}n,0)$,
$(\pi,\pi-\tfrac{2\pi}n)$ and $(\pi-\tfrac{2\pi}n,\pi)$
whose area equals
$2\cdot \tfrac{2\pi}n\cdot(\pi-\tfrac{2\pi}n)$.
Thus
$$
 I_i(f) = 2\cdot \tfrac{2\pi}n\cdot(\pi-\tfrac{2\pi}n)
$$
for all $i$, hence
$$
 I(f) = \frac1{2\pi} \sum_{i=1}^n I_i(f) = \frac n{2\pi}\cdot I_1(f) = 2\pi(1-\tfrac2n) .
$$
By Proposition \ref{I<vol},
$$
 \vol_2^{ht}(D,\phi) \ge I(f) = 2\pi(1-\tfrac2n) .
$$
Since $n$ is arbitrarily large, this inequality implies that
$\vol_2^{ht}(D,\phi)\ge 2\pi$, and
Theorem \ref{pu} follows.

\section{Proof of Theorem \ref{main}}
\label{sec-mainproof}

Let $\phi$ and $\phi_0$ be as in the theorem,
that is, $\phi_0$ is a metric with minimal geodesics
and $d_\phi(x,y) \ge d_{\phi_0}(x,y)$
for all $x,y\in\pd D$.

For every $p\in\pd D$, define a function
$f_p:D\to\R$ by
\begin{equation}
\label{specfun}
 f_p(x) =\sup_{q\in\pd D} \{ d_{\phi_0}(p,q) - d_\phi(x,q) \} .
\end{equation}
Observe that $f_p$ is forward 1-Lipschitz (with respect to~$\phi$).
Indeed, for every $q\in\pd D$ the function
$x\mapsto -d_\phi(x,q)$ is forward 1-Lipschitz
(cf.\ Example \ref{lip-dist}), hence so is
the function $x\mapsto d_{\phi_0}(p,q) - d_\phi(x,q)$,
and the supremum of a family of forward 1-Lipschitz functions
is forward 1-Lipschitz as well.

\begin{lemma}\label{same-on-boundary}
If $x\in\pd D$, then $f_p(x)=d_{\phi_0}(p,x)$.
\end{lemma}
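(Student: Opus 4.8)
The plan is to prove the two inequalities $f_p(x)\ge d_{\phi_0}(p,x)$ and $f_p(x)\le d_{\phi_0}(p,x)$ separately; both are immediate consequences of the triangle inequality \eqref{trineq} and the hypothesis $d_\phi\ge d_{\phi_0}$ on $\pd D\times\pd D$.

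For the lower bound, I would simply take $q=x$ as a competitor in the supremum \eqref{specfun}. Since $d_\phi(x,x)=0$, this gives $f_p(x)\ge d_{\phi_0}(p,x)-d_\phi(x,x)=d_{\phi_0}(p,x)$. For the upper bound, fix an arbitrary $q\in\pd D$. The triangle inequality for $\phi_0$ yields $d_{\phi_0}(p,q)\le d_{\phi_0}(p,x)+d_{\phi_0}(x,q)$, and the assumption of the theorem (applied to the ordered pair $(x,q)$, both points being on $\pd D$) gives $d_{\phi_0}(x,q)\le d_\phi(x,q)$. Combining these,
\[
 d_{\phi_0}(p,q)-d_\phi(x,q)\le d_{\phi_0}(p,x)+d_{\phi_0}(x,q)-d_\phi(x,q)\le d_{\phi_0}(p,x).
\]
Taking the supremum over $q\in\pd D$ gives $f_p(x)\le d_{\phi_0}(p,x)$, and together with the lower bound this proves the lemma.

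There is essentially no real obstacle here; the only point requiring a moment's care is the non-symmetry of the distance functions, so one must make sure to invoke the triangle inequality and the hypothesis with the arguments in the correct order (namely $d_{\phi_0}(x,q)$ and $d_\phi(x,q)$, not their reverses).
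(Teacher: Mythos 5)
Your proof is correct and follows essentially the same two-step argument as the paper: substituting $q=x$ for the lower bound, and combining the triangle inequality for $d_{\phi_0}$ with the hypothesis $d_{\phi_0}\le d_\phi$ on $\pd D\times\pd D$ for the upper bound. Your remark about keeping track of the argument order in the non-symmetric setting is apt and matches the care taken in the paper.
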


\begin{proof}
Let $x\in\pd D$.
Then $d_\phi(x,q)\ge d_{\phi_0}(x,q)$ for every $q\in\pd D$
by the assumption of the theorem.
Hence
$$
 d_{\phi_0}(p,q) - d_\phi(x,q) \le d_{\phi_0}(p,q) - d_{\phi_0}(x,q)
 \le d_{\phi_0}(p,x)
$$
by the triangle inequality.
Taking the supremum over $q\in\pd D$ yields
that
$$
f_p(x)\le d_{\phi_0}(p,x).
$$
The inverse inequality follows by substituting $q=x$ in \eqref{specfun}:
$$
f_p(x) =\sup_{q\in\pd D} \{ d_{\phi_0}(p,q) - d_\phi(x,q) \}
\ge d_{\phi_0}(p,x) - d_\phi(x,x) =d_{\phi_0}(p,x).
$$
Thus $f_p(x)=d_{\phi_0}(p,x)$.
\end{proof}

\begin{prop}\label{special-is-cyclic}
Let $p_1,\dots,p_n\in\pd D$ be a cyclically ordered
collection of points.
Then a map $f:D\to\R$ defined by
$
f = (f_{p_1},\dots,f_{p_n})
$
is cyclic (with respect to $\phi$).
\end{prop}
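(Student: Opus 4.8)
The plan is to verify the two requirements of Definition \ref{def-cyclic} at an arbitrary point $x\in D\setminus\pd D$ where all the coordinate functions $f_{p_i}$ are differentiable. The key observation is that for each $p\in\pd D$ the supremum defining $f_p(x)$ in \eqref{specfun} is attained at some $q=q(p,x)\in\pd D$, by compactness of $\pd D$ and continuity of $q\mapsto d_{\phi_0}(p,q)-d_\phi(x,q)$. For such a maximizing $q$ we have $f_p(x)=d_{\phi_0}(p,q)-d_\phi(x,q)$, and since $f_p$ is forward $1$-Lipschitz with respect to $\phi$ and, for $y$ near $x$, $f_p(y)\ge d_{\phi_0}(p,q)-d_\phi(y,q)$, the function $y\mapsto f_p(y)+d_\phi(y,q)$ has a local minimum at $x$ equal to $d_{\phi_0}(p,q)$; equivalently $f_p(x)=\bigl(f_p(x)+d_\phi(x,q)\bigr)-d_\phi(x,q)$ and $f_p$ agrees with the forward $1$-Lipschitz function $y\mapsto f_p(x)+d_\phi(x,q)-d_\phi(y,q)\ \le\ $ wait—more precisely, the inequality $f_p(y)-d_\phi(x,y)\le f_p(y)\le f_p(x)+d_\phi(x,y)$ together with $f_p(y)\ge f_p(x)-d_\phi(y,q)+d_\phi(x,q)$ and the triangle inequality pins down $d_xf_p$. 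Concretely, writing $h(y)=f_p(x)+d_\phi(x,q)-d_\phi(y,q)$, we get $h\le f_p$ near... I will instead phrase it so that the point $y$ witnessing \eqref{dl2} for $f_p$ is $q$: the identity $f_p(x)=d_{\phi_0}(p,q)-d_\phi(x,q)$ rearranges to $d_{\phi_0}(p,q)=f_p(x)+d_\phi(x,q)$, while for all $y$ one has $f_p(y)\ge d_{\phi_0}(p,q)-d_\phi(y,q)=f_p(x)+d_\phi(x,q)-d_\phi(y,q)$, hence $f_p(x)-f_p(y)\le d_\phi(y,q)-d_\phi(x,q)\le d_\phi(y,x)$, which is forward $1$-Lipschitzness again, but combined with $f_p(x)-f_p(q)\le d_\phi(q,x)$... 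The clean route: $f_p(x)=d_{\phi_0}(p,q)-d_\phi(x,q)$ gives $f_p(x)=f_p(q)+\bigl(d_{\phi_0}(p,q)-f_p(q)\bigr)-d_\phi(x,q)$, and since $f_p(q)=d_{\phi_0}(p,q)$ by Lemma \ref{same-on-boundary}, this reads $f_p(x)=f_p(q)+d_\phi(q,x)$ provided $d_\phi(x,q)=-d_\phi(q,x)$, which fails for non-reversible $\phi$. So the correct statement is simply: $f_p(x)+d_\phi(x,q)=d_{\phi_0}(p,q)=f_p(q)$, i.e. $f_p(q)=f_p(x)+d_\phi(x,q)$, which is exactly hypothesis \eqref{dl1} of Lemma \ref{dist-grad} with $f=f_p$ and $y=q$. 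Therefore Lemma \ref{dist-grad}(1) applies and yields that $\vect{xq}$ is uniquely defined and $d_xf_p=\L(\vect{xq})\in U^*_x$.

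This immediately gives requirement (1) of Definition \ref{def-cyclic}. For requirement (2), fix $i<j<k$ with $d_xf_{p_i},d_xf_{p_j},d_xf_{p_k}$ well-defined and distinct, and let $q_i,q_j,q_k\in\pd D$ be corresponding maximizers, so that $d_xf_{p_\ell}=\L(\vect{xq_\ell})$ for $\ell\in\{i,j,k\}$. Since $\L:U_x\to U^*_x$ is an orientation-preserving diffeomorphism, the cyclic order of $(d_xf_{p_i},d_xf_{p_j},d_xf_{p_k})$ in $U^*_x$ equals that of $(\vect{xq_i},\vect{xq_j},\vect{xq_k})$ in $U_x$; note these three vectors are automatically distinct because the $d_xf_{p_\ell}$ are. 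By Lemma \ref{cyclic-dir} (applied to $\phi$, which is a metric with minimal geodesics—wait, $\phi$ need not be!). This is the crux of the difficulty: Lemma \ref{cyclic-dir} and the uniqueness machinery were stated for metrics with minimal geodesics, but here they are being applied to the arbitrary competitor metric $\phi$. The point is that Lemma \ref{dist-grad} does not require minimal geodesics—it only uses the forward $1$-Lipschitz property—so $\vect{xq_\ell}$ is uniquely defined even for general $\phi$; and the proof of Lemma \ref{cyclic-dir} only uses the Jordan Curve Theorem and the fact that two distinct shortest paths from $x$ to the same boundary point, sharing a point $q\ne x$, contradict uniqueness of the direction—which is again available here because each $\vect{xq_\ell}$ is uniquely defined by Lemma \ref{dist-grad}. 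So I would explicitly note that Lemma \ref{cyclic-dir}'s argument goes through for $\phi$ verbatim once uniqueness of the relevant directions is known, and then it yields that the cyclic order of $(\vect{xq_i},\vect{xq_j},\vect{xq_k})$ matches that of $(q_i,q_j,q_k)$ in $\pd D$.

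It then remains to compare the cyclic order of $(q_i,q_j,q_k)$ with that of $(p_i,p_j,p_k)$, which is positive since $i<j<k$ and the $p_\ell$ are cyclically ordered. This is the genuine obstacle, since a priori the maximizer $q_\ell$ has nothing to do with $p_\ell$. The idea is a monotonicity argument: I claim the maximizers can be chosen so that $p\mapsto q(p,x)$ respects cyclic order, or at least that one can perturb or reselect maximizers so that $q_i,q_j,q_k$ lie in arcs compatible with $p_i,p_j,p_k$. The tool is the "four-point" or "quadrilateral" inequality: if $q=q(p,x)$ and $q'=q(p',x)$ are maximizers for $p,p'$, then optimality gives $d_{\phi_0}(p,q)-d_\phi(x,q)\ge d_{\phi_0}(p,q')-d_\phi(x,q')$ and $d_{\phi_0}(p',q')-d_\phi(x,q')\ge d_{\phi_0}(p',q)-d_\phi(x,q)$; adding these yields $d_{\phi_0}(p,q)+d_{\phi_0}(p',q')\ge d_{\phi_0}(p,q')+d_{\phi_0}(p',q)$, a cyclic-monotonicity (supermodularity) condition for the function $d_{\phi_0}$ on $\pd D\times\pd D$. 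Since $\phi_0$ is a metric with minimal geodesics, for boundary points the distance $d_{\phi_0}$ has a geodesic structure that forces such supermodular pairs $(p,q),(p',q')$ to be "cyclically unlinked": the geodesics $p q'$ and $p' q$ inside $D$ would otherwise cross, and a cut-and-paste at the crossing point would contradict minimality (exactly as in Lemma \ref{cyclic-dir}'s proof). I would carry this out by: (a) showing the shortest $\phi_0$-paths $\eta_\ell$ from $p_\ell$ to $q_\ell$ are pairwise non-crossing in $D$—if $\eta_i$ and $\eta_j$ crossed at $z$, rerouting would produce $\phi_0$-geodesics violating uniqueness of directions at $z\in D\setminus\pd D$ (using Lemma \ref{min-unique} for $\phi_0$) or would violate the supermodularity inequality; (b) concluding from non-crossing chords in a disc that the endpoints interleave consistently, i.e. the cyclic order of $(q_i,q_j,q_k)$ equals that of $(p_i,p_j,p_k)$, hence is positive. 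This chord-non-crossing step, reducing via the Jordan Curve Theorem to the already-used cut-and-paste lemma for $\phi_0$-geodesics, is where essentially all the work lies; once it is in place, chaining the three cyclic-order identities ($d_xf_{p_\ell}\leftrightarrow\vect{xq_\ell}\leftrightarrow q_\ell\leftrightarrow p_\ell$) finishes the proof.
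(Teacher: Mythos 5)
Your identification of the derivatives is correct and matches the paper: for a maximizer $q$ in \eqref{specfun}, one has $f_p(q)=d_{\phi_0}(p,q)=f_p(x)+d_\phi(x,q)$, and Lemma \ref{dist-grad}(1) gives $d_xf_p=\L(\vect{xq})$ (this is exactly the paper's Lemma \ref{pmax}). Your worry about applying Lemma \ref{cyclic-dir} to $\phi$ is unfounded: the lemma's statement assumes only that the relevant directions are uniquely defined, which you have just established; no minimal-geodesics hypothesis on $\phi$ is needed or used.

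The crux --- passing from the cyclic order of $(q_i,q_j,q_k)$ back to that of $(p_i,p_j,p_k)$ --- is where your plan has a genuine gap, and it is in the wrong chord pairing. You propose (a) to show the \emph{direct} shortest $\phi_0$-paths $\eta_\ell$ from $p_\ell$ to $q_\ell$ are pairwise non-crossing, and (b) to deduce from this that the cyclic orders agree. Neither step works as stated. For (a): if $\eta_i$ and $\eta_j$ cross at $z$, the cut-and-paste at $z$ yields $d_{\phi_0}(p_i,q_j)+d_{\phi_0}(p_j,q_i)\le d_{\phi_0}(p_i,q_i)+d_{\phi_0}(p_j,q_j)$, which is the \emph{same} direction as the supermodularity inequality coming from maximality, so no contradiction results; and a crossing of two geodesics with distinct endpoint pairs does not conflict with Lemma \ref{min-unique} either. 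For (b): pairwise non-crossing of the chords $p_\ell q_\ell$ does not force the cyclic orders to match --- the nested configuration $p_1,p_2,p_3,q_3,q_2,q_1$ in cyclic order has pairwise non-crossing chords $p_\ell q_\ell$ yet $(q_1,q_2,q_3)$ is oppositely oriented to $(p_1,p_2,p_3)$.

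What is actually needed (and what the paper proves in Lemma \ref{cyclic-aux}) is that the \emph{cross}-pair $\{p_i,q_j\}$ does not separate $\{p_j,q_i\}$ on $\pd D$ (and that $p_i\ne q_i$). If such a separation occurred, the Jordan Curve Theorem would force the \emph{cross}-geodesics $p_i\to q_j$ and $p_j\to q_i$ to meet at some $z$, and cut-and-paste at $z$ now yields the \emph{reverse} inequality $d_{\phi_0}(p_i,q_i)+d_{\phi_0}(p_j,q_j)\le d_{\phi_0}(p_i,q_j)+d_{\phi_0}(p_j,q_i)$; combined with supermodularity this is an equality chain, which forces $q_j$ to be another maximizer for $p_i$, hence $d_xf_{p_i}=\L(\vect{xq_j})=d_xf_{p_j}$, contradicting the assumed distinctness of the derivatives. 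Finally, the passage from this cross-pair non-separation to the coincidence of cyclic orders is a nontrivial combinatorial fact (the paper's Lemma \ref{cyclic-aux2}, proved via the cyclic-order cocycle identities); it is not an immediate consequence of any non-crossing statement and has to be argued separately.
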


\begin{proof}
%This proposition is similar to \cite[Lemma 1.2]{I02},
%the proof here differs only by some technical details.
%
Fix a point $x\in D\setminus\pd D$.
We say that a point $q_0\in\pd D$ is a
\textit{point of maximum} for $p\in\pd D$
if the supremum in \eqref{specfun}
is attained at~$q=q_0$, that is,
\begin{equation}
\label{pmax-eq}
 f_p(x) = d_{\phi_0}(p,q_0) - d_\phi(x,q_0) .
\end{equation}
By compactness, a point of maximum exists
for every $p\in\pd D$.

\begin{lemma}\label{pmax}
If $f_p$ is differentiable at $x$ and
$q_0$ is a point of maximum for $p$,
then the direction $\vect{xq_0}$
is uniquely defined and
$d_xf_p = \L(\vect{xq_0})\in U^*_x$.
\end{lemma}

\begin{proof}
By Lemma \ref{same-on-boundary} and \eqref{pmax-eq} we have
$
 f_p(q_0) = d_{\phi_0}(p,q_0)
 = f_p(x)+ d_\phi(x,q_0)
$.
Since $f_p$ is forward 1-Lipschitz,
this identity and Lemma \ref{dist-grad}(1) imply
the desired assertion.
\end{proof}

For every $i=1,\dots,n$, let $q_i\in\pd D$
be a point of maximum for $p_i$.
Then by Lemma \ref{pmax} we have $d_xf_{p_i}\in U^*_x$,
hence the first requirement of
Definition \ref{def-cyclic} %of a cyclic map
 is satisfied.

Since the second requirement deals with triples
of coordinate functions, it suffices to verify it
for $n=3$. Suppose that the
derivatives $d_xf_i$, $i=1,2,3$, are well-defined
and distinct. Then by Lemma \ref{pmax} the directions $\vect{xq_i}$
are uniquely defined and $d_xf_{p_i}=\L(\vect{xq_i})$.
Hence the vectors $\vect{xq_i}$ are distinct and
their cyclic ordering in $U_x$ is the same as that
of the co-vectors $d_xf_i$ in $U^*_x$.
By Lemma \ref{cyclic-dir}, the former is the
same as the cyclic ordering of points $q_i$ in~$\pd D$
(note that the points $q_i$ are distinct since
the directions $\vect{xq_i}$ are distinct).
Thus it suffices to prove that the cyclic ordering
of the points $q_1,q_2,q_3$ in $\pd D$ is the
same as that of points $p_1,p_2,p_3$.
%As the first step towards this, we prove the following lemma.

\begin{lemma}\label{cyclic-aux}
1. $q_i\ne p_i$ for every $i\in\{1,2,3\}$.

2. If $i,j\in\{1,2,3\}$ and $i\ne j$
then the set $\{p_i,q_j\}$
does not separate points $p_j$ and $q_i$ in $\pd D$,
that is, $p_j$ and $q_i$ belong to the same
connected component of $\pd D\setminus\{p_i,q_j\}$.
\end{lemma}

\begin{proof}
Suppose the contrary. Without loss of generality
we may assume that ${p_1=q_1}$ or
$\{p_1,q_2\}$ separates $\{p_2,q_1\}$ in~$\pd D$.
In either case, any curve connecting $p_1$ to $q_2$
intersects (possibly at an endpoint)
any curve connecting $p_2$ to $q_1$.
Let $z$ be a common point of a $\phi_0$-shortest path
from $p_1$ to $q_2$ and
a $\phi_0$-shortest path from $p_2$ to $q_1$.
Then
$$
\begin{aligned}
 d_{\phi_0}(p_1,q_2)+d_{\phi_0}(p_2,q_1)
 &=d_{\phi_0}(p_1,z)+d_{\phi_0}(z,q_2)+d_{\phi_0}(p_2,z)+d_{\phi_0}(z,q_1) \\
 &\ge d_{\phi_0}(p_1,q_1)+d_{\phi_0}(p_2,q_2)
\end{aligned}
$$
by the triangle inequality. Therefore
$$
\begin{aligned}
f_{p_1}(x)+f_{p_2}(x)
&\ge \big(d_{\phi_0}(p_1,q_2)-d_\phi(x,q_2)\big)+\big(d_{\phi_0}(p_2,q_1)-d_\phi(x,q_1)\big) \\
&\ge d_{\phi_0}(p_1,q_1)-d_\phi(x,q_1) + d_{\phi_0}(p_2,q_2)-d_\phi(x,q_2) \\
&= f_{p_1}(x) + f_{p_2}(x) .
\end{aligned}
$$
Here the first inequality follows from the definition \eqref{specfun}
of $f_{p_1}$ and $f_{p_2}$
and the last identity from the fact that $q_1$ and $q_2$ are points of
maximum for $p_1$ and $p_2$, resp.
Since the left- and right-hand side are the same, the intermediate
inequalities turn to equalities.
In particular, $f_{p_1}(x)=d_{\phi_0}(p_1,q_2)-d_\phi(x,q_2)$.
This means that $q_2$ is a point of maximum for $p_1$,
hence $d_xf_{p_1}=\L(\vect{xq_2})\ne\L(\vect{xq_1})$, a contradiction.
\end{proof}

Now the desired coincidence of cyclic orderings follows
from the following combinatorial lemma.

\begin{lemma}\label{cyclic-aux2}
Let $\{p_i\}_{i=1}^3$ and $\{q_i\}_{i=1}^3$ be two triples
of distinct points in $\pd D$ such that the assertion
of Lemma \ref{cyclic-aux} holds. Then the cyclic ordering
of these two triples is the same.
\end{lemma}

\begin{proof}
This fact was proved in \cite{I02} by
examination of possible configurations of six
points on a circle.
Here we give a more algebraic-style proof.

%Introduce the following notation.
For distinct points
$a,b,c\in\pd D$, we define $[abc]=1$ if the cyclic ordering
of the triple $(a,b,c)$ is positive and $[abc]=-1$ otherwise.
This notation satisfies a trivial identity
\begin{equation}
\label{cyclic0}
 [abc]^2=1,
\end{equation}
is skew-symmetric:
\begin{equation}
\label{cyclic1}
 [abc]=[bca]=[cab]=-[bac]=-[acb]=-[cba]
\end{equation}
and satisfies the standard cyclic order identity
$
 [abc][bcd][cda][dab]=1
$
for any four distinct point $a,b,c,d\in\pd D$.
Taking into account \eqref{cyclic1}, this identity
can be rewritten as
\begin{equation}\label{cyclic2}
[abc]=[abd][acd][bcd] ,
\end{equation}
and
\begin{equation}\label{cyclic3}
[abc][acd][abd] = [bcd] .
\end{equation}
The fact that $\{p_i,q_j\}$
does not separate $p_j$ from $q_i$
means that
\begin{equation}\label{cyclic4}
[p_ip_jq_j] = [p_iq_iq_j]
\end{equation}
provided that the four points are distinct.

Observe that a small perturbation of the configuration
does not break the assumptions of the lemma
and does not change the cyclic ordering of the triples
$\{p_i\}$ and $\{q_i\}$.
By means of such a perturbation we can change
the configuration so that all six points
$p_1$, $p_2$, $p_3$, $q_1$, $q_2$, $q_3$
are distinct.
Then
\begin{align*}
[p_1p_2p_3]
&= [p_1p_2q_3][p_1p_3q_3][p_2p_3q_3]\quad&&\text{by \eqref{cyclic2}} \\
&= [p_1p_2q_3][p_1q_1q_3][p_2q_2q_3]\quad&&\text{by \eqref{cyclic4}} \\
&= [p_1p_2q_2][p_1q_3q_2][p_2q_3q_2] [p_1q_1q_3][p_2q_2q_3]\quad&&\text{by \eqref{cyclic2}} \\
&= [p_1p_2q_2][p_1q_2q_3][p_1q_1q_3][p_2q_2q_3]^2\quad&&\text{by \eqref{cyclic1}} \\
&= [p_1q_1q_2][p_1q_2q_3][p_1q_1q_3]\quad&&\text{by \eqref{cyclic0} and \eqref{cyclic4}} \\
&= [q_1q_2q_3]\quad&&\text{by \eqref{cyclic3}},
\end{align*}
and the lemma follows.
\end{proof}

Lemma \ref{cyclic-aux} and Lemma \ref{cyclic-aux2}
imply that the second requirement of Definition \ref{def-cyclic}
is satisfied for the map $(f_{p_1},f_{p_2},f_{p_3})$ from $D$ to $\R^3$.
Applying this to all triples $(p_i,p_j,p_k)$ where $1\le i<j<k\le n$
yields that this requirement is satisfied
for $f=(f_{p_1},\dots,f_{p_n})$,
and Proposition \ref{special-is-cyclic} follows.
\end{proof}

\subsection*{Proof of the inequality}
Fix an $\ep>0$.
By Proposition \ref{approx-santalo} applied to $\phi_0$,
there exists a positively cyclically ordered set of points
$p_1,\dots,p_n\in\pd D$ such that the map
$$
f^0=(f^0_1,\dots,f^0_n):D\to\R^n
$$
where
$$
 f^0_i(x) = d_{\phi_0}(p_i,x) \qquad x\in D,\ i=1,\dots,n,
$$
satisfies
$$
 I(f^0)>\vol_2^{ht}(D,\phi_0)-\ep .
$$
Define $f=(f_{p_1},\dots,f_{p_n}):D\to\R^n$
where the functions $f_{p_i}$ are defined by \eqref{specfun}.
Then Lemma \ref{same-on-boundary} implies that
$f|_{\pd D}=f^0|_{\pd D}$, hence
$I(f)=I(f^0)$ by Lemma \ref{l:I-by-boundary}.
On the other hand, Proposition \ref{special-is-cyclic}
implies that $f$ is cyclic (with respect to $\phi$)
and hence $\vol_2^{ht}(D,\phi)\ge I(f)$ by Proposition \ref{I<vol}.
Thus
$$
 \vol_2^{ht}(D,\phi)\ge I(f) = I(f^0) > \vol_2^{ht}(D,\phi_0)-\ep .
$$
Since $\ep$ is arbitrary,
it follows that $\vol_2^{ht}(D,\phi)\ge\vol_2^{ht}(D,\phi_0)$.
Thus we have proved the inequality part of Theorem \ref{main}.

\subsection*{The equality case}
Suppose that  $\vol_2^{ht}(D,\phi)=\vol_2^{ht}(D,\phi_0)$.
First we prove that $\phi$ is a metric with minimal geodesics.
Arguing by contradiction, suppose that a geodesic
$\gamma:[0,T]\to D\setminus\pd D$ of $\phi$
is not minimal, that is, $L_\phi(\gamma)>d_\phi(\gamma(0),\gamma(T))$.
Then a similar inequality holds for every geodesic of length $T$
with initial velocity sufficiently close to $v:=\dot\gamma(0)$.
Such geodesics cannot be parts of shortest paths, hence there
is a neighborhood of $v$ in $TD$ avoided by velocity
vectors of shortest paths.
A small perturbation of the metric in this neighborhood does
not change the boundary distances,
and one can choose a perturbation so that the resulting metric $\tilde\phi$
is such that $\tilde\phi\le\phi$ everywhere and $\tilde\phi(v)<\phi(v)$.
Then $\vol_2^{ht}(D,\tilde\phi)<\vol_2^{ht}(D,\phi)=\vol_2^{ht}(D,\phi_0)$,
contrary to the inequality part of the theorem
(applied to $\tilde\phi$ in place of $\phi$).
Thus $\phi$ is a metric with minimal geodesics.

Now suppose that the boundary distance functions of $\phi_0$ and $\phi$
differ. It is easy to see that then there are points $p,q\in\pd D$
such that $d_\phi(p,q)>d_{\phi_0}(p,q)$ and
a shortest path from $p$ to $q$ is a geodesic
which hits the boundary transversally.
Then one gets a contradiction
similarly to the above argument,
namely slightly shrinking the metric in a neighborhood
of a velocity vector of this geodesic.

Thus if $\vol_2^{ht}(D,\phi)=\vol_2^{ht}(D,\phi_0)$,
then $\phi$ is a metric with minimal geodesics
and has the same boundary distance function as~$\phi_0$.
Conversely, these two properties imply that the areas
are equal. To see this, just interchange $\phi$
and $\phi_0$ in the inequality part of the theorem.

This finishes the proof of Theorem \ref{main}.

\bibliographystyle{plain}

\end{document}